
\documentclass[10pt, english]{elsarticle}
\usepackage{amsthm}
\usepackage{amsmath}
\usepackage{latexsym, amssymb}
\usepackage{txfonts}
\usepackage{mathtools}
\usepackage{color}
\usepackage{babel}
\usepackage[all]{xy}
\usepackage[capitalise]{cleveref}

\newtheorem{thm}{Theorem}[section] 

\newtheorem{cor}[thm]{Corollary}

\newtheorem{lem}[thm]{Lemma}
\newtheorem{prop}[thm]{Proposition}

\newtheorem{ques}[thm]{Question}

\theoremstyle{definition}
\newtheorem{rem}[thm]{Remark}
\newtheorem{exmpl}[thm]{Example}

\newcommand\operA[2]{{\if!#2!\operatorname{#1}\else{\operatorname{#1}_{#2}^{\phantom{I}}}\fi}} 

%
%
%
%
%
%
%
%


\newcommand{\Trace}[1][]{\if!#1!\operatorname{Tr}\else{\operatorname{Tr}_{#1}^{\phantom{I}}}\fi} 

\long\def\forget#1\forgotten{{}} %

\def\({\left(}
\def\){\right)}


\newcommand\LAY[3][]{{\begin{array}{c}\mbox{#2} \if#1!{}\else{+}\fi \\ \mbox{#3}\end{array}}}

\makeatletter
\newcommand{\bigperp}{%
  \mathop{\mathpalette\bigp@rp\relax}%
  \displaylimits
}

\newcommand{\bigp@rp}[2]{%
  \vcenter{
    \m@th\hbox{\scalebox{\ifx#1\displaystyle2.1\else1.5\fi}{$#1\perp$}}
  }%
}
\makeatother

\newcommand{\qf}[1]{\mbox{$\langle #1\rangle $}}

\renewcommand{\geq}{\geqslant}
\renewcommand{\leq}{\leqslant}

\makeatletter
\def\ps@pprintTitle{%
 \let\@oddhead\@empty
 \let\@evenhead\@empty
 \def\@oddfoot{\centerline{\thepage}}%
 \let\@evenfoot\@oddfoot}
\makeatother

\newif\iffurther
\furtherfalse

\input xy
\input xyidioms.tex
\usepackage{xy}
\xyoption{all} %


\begin{document}
\begin{frontmatter}

\title{Types of Linkage of Quadratic Pfister Forms}

\author{Adam Chapman}
\ead{adam1chapman@yahoo.com}
\address{Department of Computer Science, Tel-Hai Academic College, Upper Galilee, 12208 Israel}
\author{Andrew Dolphin}
\ead{Andrew.Dolphin@uantwerpen.be}
\address{Departement Wiskunde--Informatica, Universiteit Antwerpen, Belgium}

\begin{abstract}
Given a field $F$ of positive characteristic $p$, $\theta \in H_p^{n-1}(F)$ and $\beta,\gamma \in F^\times$, we prove that if the symbols $\theta \wedge \frac{d \beta}{\beta}$ and $\theta \wedge \frac{d \gamma}{\gamma}$ in $H_p^n(F)$ share the same factors in $H_p^1(F)$ then the symbol $\theta \wedge \frac{d \beta}{\beta} \wedge \frac{d \gamma}{\gamma}$ in $H_p^{n+1}(F)$ is trivial. We conclude that when $p=2$, every two totally separably $(n-1)$-linked  $n$-fold quadratic Pfister forms are inseparably $(n-1)$-linked. We also describe how to construct non-isomorphic $n$-fold Pfister forms which are totally separably (or inseparably) $(n-1)$-linked, i.e. share all common $(n-1)$-fold  quadratic (or bilinear) Pfister factors. \end{abstract}

\begin{keyword}
Kato-Milne Cohomology, Fields of Positive Characteristic, Quadratic Forms, Pfister Forms, Quaternion Algebras, Linkage
\MSC[2010] 11E81 (primary); 11E04, 16K20, 19D45 (secondary)
\end{keyword}
\end{frontmatter}

\section{Introduction}

Linkage of Pfister forms is a classical topic in quadratic form theory.
We say that two $n$-fold Pfister forms over a field $F$ are separably (inseparably, resp.) $m$-linked if there exists an $m$-fold quadratic (bilinear) Pfister form which is a common factor of both forms. When $\operatorname{char}(F) \neq 2$, there is no difference between quadratic and bilinear factors, so the terms coincide, and we simply say $m$-linked.

We say that two quadratic $n$-fold Pfister forms are totally separably (inseparably) $m$-linked if every quadratic (bilinear) $m$-fold Pfister factor of one of them is also a factor of the other. The following facts were proven in \cite{ChapmanDolphinLaghribi}:
\begin{itemize}
\item Two  $n$-fold quadratic Pfister forms can be totally separably 1-linked, inseparably 1-linked, or even both, without being isometric. (The special case of quaternion algebras over fields of characteristic not 2 was covered in \cite{GaribaldiSaltman}.)
\item Total separable 1-linkage and total inseparable 1-linkage of  $n$-fold  quadraticPfister forms are independent properties, i.e. do not imply each other.
\end{itemize} 

Clearly total separable (or inseparable) $(n-1)$-linkage implies (nontotal) separable (inseparable, resp.) $(n-1)$-linkage. 
It is known that inseparable $(n-1)$-linkage of quadratic $n$-fold Pfister forms implies separable $(n-1)$-linkage, but the converse is in general not true (see \cite{Lam:2002}, \cite{Faivre:thesis}, \cite{ChapmanGilatVishne:2017}, \cite{ChapmanMcKinnie:2018}, \cite{Chapman:2015} and \cite{BGL} for references). 
\begin{ques}
Does total separable $(n-1)$-linkage of  $n$-fold quadratic Pfister forms imply (nontotal) inseparable $(n-1)$-linkage?
\end{ques}
$$\xymatrix{
\text{Total Separable}\ (n-1)\text{-Linkage}\ar@{->}[d]\ar@{->}[dr]^{?}\ar@{<->}[r]|{\backslash}^{n=2}&\text{Total Inseparable}\ (n-1)\text{-Linkage}\ar@{->}[d]\\
\text{Separable}\ (n-1)\text{-Linkage}\ar@/_3ex/@{->}[r]|{\backslash}\ar@{<-}[r]&\text{Inseparable}\ (n-1)\text{-Linkage}
}$$
We answer this question in the positive in Section \ref{SectionInv}. We conclude it from deeper results on linkage of symbols in the Kato-Milne cohomology groups.

There are several other natural questions that arise in this setting:
\begin{ques}\label{Q1}
Do there exist totally separably (inseparably) $m$-linked quadratic $n$-fold Pfister forms which are not isometric for a given $m \in \{1,\dots,n-1\}$?
\end{ques}
\begin{ques}\label{Q2}
Over fields of characteristic 2, are total separable $m$-linkage and total inseparable $m$-linkage independent properties?
\end{ques}
\begin{ques}\label{Q3}
Given $1 \leq \ell < m \leq n-1$, are there totally separably (or inseparably) $\ell$-linked  $n$-fold  quadratic Pfister forms which are not totally separably (inseparably, resp.) $m$-linked?
\end{ques}

We answer Question \ref{Q1} in full generality (in the positive), and Question \ref{Q3} in the case of fields of characteristic not 2 and $m=n-1$ (see Section \ref{SectionTypes}). Question 1.3 was answered in the negative in \cite{ChapmanDolphinLaghribi} for $m=1$, but it remains open for arbitrary $m$.

\section{Preliminaries}

\subsection{Quadratic Forms}
For general reference on symmetric bilinear forms and quadratic forms see \cite{EKM}. 
Throughout, let $F$ be a field and  $V$ an  $F$-vector space. 
%
A quadratic form over $F$ is a map $\varphi : V \rightarrow F$ 
such that $\varphi(av)= a^2\varphi(v)$ for all $a\in F$ and $v\in V$ and the map defined by  $B_\varphi(v,w)=\varphi(v+w)-\varphi(v)-\varphi(w)$ for all $v,w\in V$ is a bilinear form on $V$. The bilinear form $B_\varphi$ is called the polar form of $\varphi$ and is clearly symmetric.  
  Two quadratic forms $\varphi : V \rightarrow F$ and $\psi : W \rightarrow F$ are isometric if there exists an isomorphism $M : V \rightarrow W$ such that $\varphi(v)=\psi(Mv)$ for all $v \in V$.
We are interested in the isometry classes of quadratic forms, so when we write $\varphi=\psi$ we actually mean that they are isometric.

%
%

We say that $\varphi$ is singular if $B_\varphi$ is degenerate, and that $\varphi$ is nonsingular if $B_\varphi$ is nondegenerate.
If $F$ is of characteristic $2$, every nonsingular form $\varphi$ is even dimensional and can be written as 
$$\varphi=[\alpha_1,\beta_1] \perp \dots \perp [\alpha_n,\beta_n]$$
for some $\alpha_1,\dots,\beta_n \in F$, where $[\alpha,\beta]$ denotes the two-dimensional quadratic form $\psi(x,y)=\alpha x^2+xy+\beta y^2$ and $\perp$ denotes  the orthogonal sum of quadratic forms. If the characteristic of $F$ is different from $2$, symmetric bilinear forms and quadratic forms are equivalent objects, and we do not distinguish between them in this case. 
%
The unique nonsingular two-dimensional isotropic quadratic form is $\varmathbb{H}=[0,0]$, called the hyperbolic plane. 
A hyperbolic form is an orthogonal sum of hyperbolic planes. Any quadratic form $\varphi$ over $F$ decomposes into an orthogonal sum of a uniquely determined anisotropic quadratic form and a number of hyperbolic planes. The number of hyperbolic planes appearing in this decomposition is called the Witt index and denoted $i_W(\varphi)$.

We denote by $\langle \alpha_1,\dots,\alpha_n \rangle$ the diagonal bilinear form given by  
$(x,y)\mapsto \sum_{i=1}^n \alpha_ix_iy_i$. 
%
A bilinear $n$-fold Pfister form over $F$ is a symmetric bilinear form isometric to a bilinear form
$$\langle 1,\alpha_1\rangle \otimes \langle 1,\alpha_2\rangle \otimes \dots \otimes \langle 1,\alpha_n\rangle$$
for some $\alpha_1,\alpha_2,\dots,\alpha_n \in F^\times$.
We denote such a form by $\langle \! \langle \alpha_1,\alpha_2,\dots,\alpha_n \rangle \! \rangle$. By convention, the bilinear 0-fold Pfister form is $\langle 1 \rangle$. The $2$-fold Pfister  forms generate the fundamental  ideal $IF$ in the Witt ring of nondegenerate symmetric bilinear forms $WF$. Powers of $IF$ are denoted by $I^nF$, and are generated by $n$-fold Pfister forms respectively.


  Let $B:V\times V\rightarrow F$ be a symmetric bilinear form over $F$ and $\varphi:W\rightarrow F$ be a quadratic form over $F$. We may define a  quadratic form $B\otimes \varphi:V\otimes_F W\rightarrow F$ determined by the rule that 
$( B\otimes \varphi) (v\otimes w)=  B(v,v) \cdot \varphi(w)$
for all $w\in W, v\in V$. We call this quadratic form  the tensor product of $B$ and $\varphi$. 
A quadratic $n$-fold Pfister form over $F$ is a tensor product of a bilinear $(n-1)$-fold Pfister form  $\langle \! \langle \alpha_1,\alpha_2,\dots,\alpha_{n-1} \rangle \! \rangle$ and a two-dimensional quadratic form  $[1,\beta]$ for some $\beta \in F$.
We denote such a form by $\langle \! \langle \alpha_1,\dots,\alpha_{n-1},\beta]\!]$. Quadratic $n$-fold Pfister forms are isotropic if and only if they are hyperbolic (see  \cite[(9.10)]{EKM}).
The $2$-fold quadratic Pfister forms generate the fundamental ideal, denoted $I_qF$ or $I_q^1 F$, of the Witt group of nonsingular quadratic forms.
Let $I_q^n F$ denote the subgroup generated by scalar multiples of quadratic $n$-fold Pfister forms.

%
%
%
%
Given a symmetric bilinear form $B$, we denote by $Q(B)$ the quadratic form given by the map  $v \mapsto B(v,v)$.

Let $\pi$ be an $n$-fold quadratic Pfister form over $F$. For  $m\in\{1,\ldots, n\}$,
we say an $m$-fold quadratic (resp.~bilinear) Pfister form $\psi$ (resp.~ $B$) is a factor of $\pi$ if there exists an $(n-m)$-fold bilinear  (reps.~quadratic) Pfister form $B'$  (resp.~$\psi'$) such that $\pi=B'\otimes \psi$ (resp.~$\pi=B\otimes \psi'$).

Let $\omega$ be an $n$-fold quadratic  Pfister form over $F$. We say $\pi$ and $\omega$ are separably  (resp.~inseparably) $m$-linked if there exists an $m$-fold quadratic (resp.~bilinear) Pfister form $\psi$  such that $\psi$ is a factor of both $\pi$ and $\omega$. We say $\pi$ and $\omega$ are totally separably (resp.~inseparably) $m$-linked if every quadratic (resp.~bilinear) $m$-fold Pfister form is a factor of $\pi$ if and only if it is a factor of $\omega$.  This terminology comes from the fact that  in characteristic $2$, the function fields of quadratic (resp.~bilinear) Pfister forms are separable (resp.~inseparable) extensions of the ground field.

\subsection{Kato-Milne Cohomology}

In this section, assume $F$ is a field of characteristic $p>0$. For $n>0$, the Kato-Milne Cohomology group $H_p^{n+1}(F)$ is defined to be the cokernel of the Artin-Schreier map
$$\wp : \Omega_F^n \rightarrow \Omega_F^n/d \Omega_F^{n-1}$$
$$\alpha \frac{d \beta_1}{\beta_1} \wedge \dots \wedge \frac{d \beta_n}{\beta_n} \mapsto (\alpha^p-\alpha) \frac{d \beta_1}{\beta_1} \wedge \dots \wedge \frac{d \beta_n}{\beta_n}.$$
We also fix $H_p^1(F)$  to be $F/\wp(F)$. The group $\nu_F(n)$ is defined to be the kernel of this map.
By \cite{BlochKato:1986}, $\nu_F(n) \cong K_n F/p K_n F$, with the isomorphism given by
$$\frac{d \beta_1}{\beta_1} \wedge \dots \wedge \frac{d \beta_n}{\beta_n} \mapsto \{\beta_1,\dots,\beta_n\}.$$
It is known that 
$H_p^2(F) \cong {_pBr(F)}$ and for $p=2$, $H_2^n(F) \cong I_q^n F/I_q^{n+1} F$.
The first isomorphism is given by the map
$$\alpha \frac{d \beta}{\beta} \mapsto [\alpha,\beta)_{p,F}$$
where $[\alpha,\beta)_{p,F}$ stands for the symbol $p$-algebra
$$F \langle x,y : x^p-x=\alpha, y^p=\beta, y x y^{-1}=x+1 \rangle.$$
The second isomorphism is given by the map
$$\alpha \frac{d \beta_1}{\beta_1} \wedge \dots \wedge \frac{d \beta_n}{\beta_n} \mapsto \langle \! \langle \beta_1,\dots,\beta_n,\alpha]\!].$$

We call the logarithmic differentials $\alpha \frac{d \beta_1}{\beta_1} \wedge \dots \wedge \frac{d \beta_{n-1}}{\beta_{n-1}}$ in $H_p^n(F)$ and $\frac{d \gamma_1}{\gamma_1} \wedge \dots \wedge \frac{d \gamma_m}{\gamma_m}$ in $\nu_F(m)$ ``symbols".
There is a natural map
$$H_p^n(F) \times \nu_F(m) \rightarrow H_p^{m+n}(F)$$
defined by the wedge product
$$\left(\alpha \frac{d \beta_1}{\beta_1} \wedge \dots \wedge \frac{d \beta_{n-1}}{\beta_{n-1}},\frac{d \gamma_1}{\gamma_1} \wedge \dots \wedge \frac{d \gamma_m}{\gamma_m}\right) \mapsto \alpha \frac{d \beta_1}{\beta_1} \wedge \dots \wedge \frac{d \beta_{n-1}}{\beta_{n-1}}\wedge \frac{d \gamma_1}{\gamma_1} \wedge \dots \wedge \frac{d \gamma_m}{\gamma_m}.$$

We define the linkage of symbols in an analogous manner to the linkage of Pfister forms. 
If a symbol $\omega$ in $H_p^{m+n}(F)$ is a wedge product $\theta \wedge \psi$ where $\theta$ is a symbol in $H_p^n(F)$ and $\psi$ is a symbol in $\nu_F(m)$, then $\theta$ and $\psi$ are called factors of $\omega$. We say that two symbols $\pi$ and $\omega$ are separably $k$-linked if they have a common factor in $H_p^k(F)$, and inseparably $k$-linked if they have a common factor in $\nu_F(k)$.
We say that two symbols $\pi$ and $\omega$ are totally separably $k$-linked if they share all factors in $H_p^k(F)$, and inseparably $k$-linked if they share all factor in $\nu_F(k)$.

\section{Separably $(n-1)$-linked Symbols in $H_p^n(F)$}\label{SectionInv}

In this section, assume  $F$ is a field of characteristic $p>0$.  One of the main goals is to show that total separable $(n-1)$-linkage implies inseparable  $(n-1)$-linkage for quadratic $n$-fold Pfister forms when $p=2$. 

\begin{lem}\label{commonslot} For $\alpha\in F$ and $\beta \in F^\times$,  let $$t=  \alpha +\frac{(\alpha-\beta)}{\gamma}\,.$$
The symbol $p$-algebra $[\alpha,\gamma)_{p,F}$ contains the \'etale extension $F[x :x^p-x=t^p\gamma +\beta]$ of $F$. 
\end{lem}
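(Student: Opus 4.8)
The plan is to exhibit an explicit element $z$ of the symbol $p$-algebra $A = [\alpha,\gamma)_{p,F}$ satisfying $z^p - z = t^p\gamma + \beta$; since the polynomial $X^p - X - (t^p\gamma+\beta)$ has derivative $-1$ and is therefore separable, the subring $F[z]$ will automatically be the desired \'etale extension. Here $\gamma \in F^\times$ is the scalar appearing in the algebra, so $t$ is well defined. I write $A = F\langle x, y : x^p - x = \alpha,\ y^p=\gamma,\ yxy^{-1}=x+1\rangle$ as in the definition, which gives $yx = (x+1)y$ and, more generally, $y\,f(x) = f(x+1)\,y$ for every polynomial $f$.

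First I would introduce the auxiliary element $v = (x+t)y$, using that $t\in F$ is central. From $y(x+t) = (x+t+1)y$ a short induction yields $v^k = (x+t)(x+t+1)\cdots(x+t+k-1)\,y^k$, and in particular, invoking the classical identity $\prod_{j=0}^{p-1}(X+j)=X^p-X$ over $\FF_p$ together with $x^p-x=\alpha$ and $y^p=\gamma$,
$$v^p = \big((x+t)^p-(x+t)\big)\gamma = (\alpha + t^p - t)\gamma.$$
Moreover $v$ obeys the same commutation rule as $y$, namely $vx = (x+t)(x+1)y = (x+1)(x+t)y = (x+1)v$, since polynomials in the single element $x$ commute.

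The heart of the argument is the computation of $z^p$ for $z := x+v$, and this is the step I expect to be the main obstacle, because $x$ and $v$ do not commute. The key simplification is that $[v,x] = vx - xv = v$, so iterating brings nothing new. Concretely, for $\operatorname{ad}(\lambda x + v)$ one finds $\operatorname{ad}(\lambda x+v)^{k}(x) = (-\lambda)^{k-1}v$ for all $k\geq 1$, so feeding this into Jacobson's formula $(a+b)^p = a^p + b^p + \sum_{i=1}^{p-1}s_i(a,b)$, with $\sum_i i\,s_i(a,b)\lambda^{i-1} = \operatorname{ad}(\lambda a+b)^{p-1}(a)$, leaves exactly one surviving Lie term, a single copy of $v$. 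Hence $(x+v)^p = x^p+v^p+v$, so that
$$z^p - z = (x^p - x) + v^p = \alpha + (\alpha + t^p - t)\gamma.$$
(For the application $p=2$ one may bypass Jacobson's formula and simply expand $z^2$ directly.)

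Finally I would match this scalar with the target: from $t = \alpha + (\alpha-\beta)/\gamma$ one has $\beta = \alpha + (\alpha-t)\gamma$, whence $t^p\gamma + \beta = \alpha + \alpha\gamma + (t^p-t)\gamma$, exactly the expression obtained above. Since $z = x + xy + ty$ has a nonzero $y$-component it does not lie in $F$, so $F[z]$ realizes the \'etale extension $F[x : x^p - x = t^p\gamma+\beta]$ inside $[\alpha,\gamma)_{p,F}$, completing the proof.
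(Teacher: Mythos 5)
Your proof is correct, and it constructs exactly the same witness element as the paper: your $z = x + v = x + xy + ty$ is the paper's $i + tj + ij$. The genuine difference lies in how the identity $z^p - z = t^p\gamma + \beta$ is verified. The paper outsources the entire computation, citing \cite[Lemma 3.1]{Chapman:2015} for the value of $z^p-z$; you instead give a self-contained derivation, and your way of organizing it is more conceptual than a brute-force expansion: writing $z = x+v$ with $v = (x+t)y$, you check that $vxv^{-1} = x+1$ and that $v^p = (\alpha + t^p - t)\gamma$ is a scalar (via $\prod_{j=0}^{p-1}(X+j) = X^p - X$ together with $y^p=\gamma$), so the pair $(x,v)$ satisfies the same defining relations as $(x,y)$ but with a new scalar slot, and everything reduces to the standard fact that $(u+w)^p - (u+w) = (u^p-u) + w^p$ whenever $wuw^{-1} = u+1$ and $w^p$ is central. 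Your Jacobson-formula argument for that fact is sound: $[v,x] = v$ forces $\operatorname{ad}(\lambda x + v)^k(x) = (-\lambda)^{k-1}v$, so only the single Lie term $s_{p-1} = v$ survives and $(x+v)^p = x^p + v^p + v$; the scalar bookkeeping also checks out, since $\beta = \alpha + (\alpha - t)\gamma$ gives $t^p\gamma + \beta = \alpha + (\alpha + t^p - t)\gamma$, which is exactly your $z^p - z$. In effect you have reproved the cited lemma of \cite{Chapman:2015} rather than quoted it: the cost is an appeal to Jacobson's formula (avoidable for $p=2$, as you note), the gain is a proof that is self-contained and explains \emph{why} the element $i+tj+ij$ works. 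One shared elision: your closing step ($z \notin F$, hence $F[z]$ is the required \'etale algebra) settles the matter when $[\alpha,\gamma)_{p,F}$ is a division algebra (then $[F[z]:F]$ must equal $p$), but if the algebra is split and $t^p\gamma+\beta \in \wp(F)$, $F[z]$ could a priori be a proper quotient of $F[X]/(X^p - X - t^p\gamma - \beta)$ when $p>2$; since a split algebra contains every degree-$p$ \'etale algebra anyway, this case is trivial, and the paper's own proof glosses over the same point, so it is not a gap specific to your argument.
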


\begin{proof}
Let $i$ and $j$ be a pair of generators of $[\alpha,\gamma)_{p,F}$ with $i^p-i=\alpha$, $j^p=\gamma$  and $jij^{-1}=i+1$.
Take  $x= i+tj +ij$ in $[\alpha,\gamma)_{p,F}$. Then $x^p-x$ is equal to $$\gamma \alpha^p+\gamma^{1-p} \alpha^p-\gamma^{1-p} \beta^p+\beta = t^p\gamma +\beta$$  by \cite[Lemma 3.1]{Chapman:2015}. Hence  the subalgebra $F[x]$ of  $[\alpha,\gamma)_{p,F}$ is as required.
\end{proof}

\begin{prop}\label{main}
Consider two separably $(n-1)$-linked symbols $$\pi=\alpha \frac{d \delta_1}{\delta_1} \wedge \dots \wedge \frac{d \delta_{n-2}}{\delta_{n-2}} \wedge \frac{d \beta}{\beta}\quad \textrm{ and }\quad\omega=\alpha \frac{d \delta_1}{\delta_1} \wedge \dots \wedge \frac{d \delta_{n-2}}{\delta_{n-2}} \wedge \frac{d \gamma}{\gamma}$$ in $H_p^n(F)$ and let $t=  \alpha +\frac{(\alpha-\beta)}{\gamma}$.
If $t^p \gamma +\beta$ is a factor in $H_p^1(F)$ of $\pi$, then the class of $\alpha \frac{d \delta_1}{\delta_1} \wedge \dots \wedge \frac{d \delta_{n-2}}{\delta_{n-2}} \wedge \frac{d \beta}{\beta} \wedge \frac{d\gamma}{\gamma}$ in $H_p^{n+1}(F)$ is trivial.
\end{prop}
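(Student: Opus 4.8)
The plan is to reduce everything to a single arithmetic identity in $H_p^{n+1}(F)$, after first using \Lref{commonslot} to put $\pi$ and $\omega$ on an equal footing. I would begin by clearing the degenerate cases: if $\beta$ or $\gamma$ lies in $F^{\times p}$ then $\frac{d\beta}{\beta}=0$ or $\frac{d\gamma}{\gamma}=0$ and there is nothing to prove, while if $s:=t^p\gamma+\beta\in\wp(F)$ then its class in $H_p^1(F)$ vanishes, so the assumed factorisation already forces $\pi=0$. Assuming these away, write $\theta=\alpha\,\eta_0$ with $\eta_0=\frac{d\delta_1}{\delta_1}\wedge\dots\wedge\frac{d\delta_{n-2}}{\delta_{n-2}}\in\nu_F(n-2)$. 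Now \Lref{commonslot} exhibits a maximal \'etale subfield $F[x:x^p-x=s]$ inside the degree-$p$ algebra $[\alpha,\gamma)_{p,F}$, so $[\alpha,\gamma)_{p,F}\cong[s,b)_{p,F}$ for some $b\in F^\times$, i.e. $\alpha\frac{d\gamma}{\gamma}=s\frac{db}{b}$ in $H_p^2(F)$. Wedging this with $\eta_0$ gives $\omega=s\,(\eta_0\wedge\frac{db}{b})$, so $s$ is a factor of $\omega$ in $H_p^1(F)$; together with the hypothesis, $s$ is a common $H_p^1$-factor of $\pi$ and $\omega$, equivalently both are split by the Artin--Schreier extension $L=F[x:\wp(x)=s]$.

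The technical heart is the identity
$$s\,\frac{d\beta}{\beta}\wedge\frac{d\gamma}{\gamma}=0\quad\text{in } H_p^3(F),$$
which is special to the choice $s=t^p\gamma+\beta$. Splitting $s=t^p\gamma+\beta$, the first piece is $t^p\gamma\frac{d\beta}{\beta}\wedge\frac{d\gamma}{\gamma}=t^p\frac{d\beta}{\beta}\wedge d\gamma$; since $\chr F=p$ we have $t^p d\gamma=d(t^p\gamma)$, so this is exact, and the second piece is $\beta\frac{d\beta}{\beta}\wedge\frac{d\gamma}{\gamma}=d\beta\wedge\frac{d\gamma}{\gamma}$, again exact. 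Exact forms die in $H_p^3(F)$, so the identity holds, and wedging with $\eta_0$ yields the version I actually want, $\eta_0\wedge s\frac{d\beta}{\beta}\wedge\frac{d\gamma}{\gamma}=0$ in $H_p^{n+1}(F)$.

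It then suffices to exhibit a representative of $\pi$ of the shape $s\,\eta_0'\wedge\frac{d\beta}{\beta}$ with $\eta_0'\in\nu_F(n-2)$, for then $\pi\wedge\frac{d\gamma}{\gamma}=\eta_0'\wedge\big(s\frac{d\beta}{\beta}\wedge\frac{d\gamma}{\gamma}\big)=0$, which is exactly the assertion. Producing this \emph{simultaneous} factorisation — one displaying the scalar slot $s$ and the logarithmic slot $\frac{d\beta}{\beta}$ at the same time — is the main obstacle, and it is precisely where the common splitting field $L$ and the explicit element $x=i+tj+ij$ of \Lref{commonslot} should enter. I expect the naive route to be useless: substituting the slot identities coming from \Lref{commonslot} and from the hypothesis into $\pi\wedge\frac{d\gamma}{\gamma}$ and simplifying with the exactness relations is provably circular, since every such manipulation merely reproves $s\frac{d\beta}{\beta}\wedge\frac{d\gamma}{\gamma}=0$ and places no further constraint on the class $\pi\wedge\frac{d\gamma}{\gamma}$. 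The factorisation must therefore be extracted from genuine structure. The route I would pursue is a common-slot (chain) argument over $L$: from $\operatorname{res}_{L/F}\pi=\operatorname{res}_{L/F}\omega=0$ one descends to $L$, where the defining relation reads $\beta=x^p-x-t^p\gamma$, and then one uses the corestriction $\operatorname{cor}_{L/F}$ and the projection formula to transport the vanishing back to $F$. Closing this descent — showing that the obstruction to aligning the two slots simultaneously is a corestriction from $L$ — is the step I anticipate will demand the most care.
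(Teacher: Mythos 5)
Your preparatory steps are all correct: the treatment of the degenerate cases, the deduction from \cref{commonslot} that $s=t^p\gamma+\beta$ is an $H_p^1(F)$-factor of $\omega$, and the computation that
$$s\,\frac{d\beta}{\beta}\wedge\frac{d\gamma}{\gamma}=\frac{d\beta}{\beta}\wedge d(t^p\gamma)+d\beta\wedge\frac{d\gamma}{\gamma}$$
is exact, hence trivial in $H_p^3(F)$. The gap is the reduction you build on top of this. You reduce the proposition to exhibiting a representation $\pi=s\,\eta_0'\wedge\frac{d\beta}{\beta}$ in which the scalar $s$ and the slot $\frac{d\beta}{\beta}$ are displayed \emph{simultaneously}, and you do not prove this; the descent-to-$L$/corestriction strategy is only named, not carried out. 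Moreover, this simultaneous factorisation is strictly stronger than anything you have in hand: already for $p=2$, $n=2$ it asserts $[\alpha,\beta)_{2,F}\cong[s,\beta)_{2,F}$, whereas the hypothesis only gives $[\alpha,\beta)_{2,F}\cong[s,b)_{2,F}$ for \emph{some} $b\in F^\times$ (a different presentation, with no control on the second slot), and even the conclusion of the proposition — which by \cite{ChapmanGilatVishne:2017} amounts to inseparable $(n-1)$-linkage of $\pi$ and $\omega$ — does not obviously yield it. So the proposal replaces the statement by a harder, unproven (and possibly false) claim.

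The missing ingredient — and the reason your dismissal of the ``naive route'' as provably circular is mistaken — is a slot-exchange identity, which is exactly how the paper argues, quoting \cite[Lemma 5.1, (e)]{ChapmanMcKinnie:2018}:
$$\alpha \frac{d \beta}{\beta} \wedge \frac{d \gamma}{\gamma}=\alpha \frac{d (\beta \gamma^{-1})}{\beta \gamma^{-1}} \wedge \frac{ds}{s}.$$
Wedging with $\frac{d\delta_1}{\delta_1}\wedge\dots\wedge\frac{d\delta_{n-2}}{\delta_{n-2}}$ and expanding $\frac{d(\beta\gamma^{-1})}{\beta\gamma^{-1}}=\frac{d\beta}{\beta}-\frac{d\gamma}{\gamma}$ gives
$$\pi\wedge\frac{d\gamma}{\gamma}=\pi\wedge\frac{ds}{s}-\omega\wedge\frac{ds}{s},$$
and now the \emph{weak} factorisations you already possess suffice: writing $\pi=s\tau$ (the hypothesis) and $\omega=s\tau'$ (from \cref{commonslot}), with $\tau,\tau'\in\nu_F(n-1)$ arbitrary symbols, each term contains $s\frac{ds}{s}=ds$ and is therefore exact; e.g.\ $\pi\wedge\frac{ds}{s}=\pm\,\tau\wedge ds=\pm\,d(s\tau)=0$ in $H_p^{n+1}(F)$, since $\tau$ is closed. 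No simultaneous alignment of the two slots is ever needed: the identity converts the offending slot $\frac{d\gamma}{\gamma}$ into $\frac{ds}{s}$ in both terms, after which an $H_p^1$-factorisation of each symbol — with the remaining slots completely uncontrolled — kills everything. This is the step your argument lacks, and with it your own exactness computation becomes unnecessary.
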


\begin{proof} 
Note first that if $t^p\gamma +\beta =0$ then $d\gamma\wedge d\beta=0$ and the result holds. Assume otherwise. 
The class of $ t^p\gamma +\beta$ in $H_p^1(F)$ is a factor of $\omega$ by \cref{commonslot}, so  it is a common factor of $\pi$ and $\omega$. 
We have  
$$\alpha \frac{d \beta}{\beta} \wedge \frac{d \gamma}{\gamma}=\alpha \frac{d (\beta \gamma^{-1})}{\beta \gamma^{-1}} \wedge \frac{d(t^p \gamma+\beta)}{t^p \gamma+\beta}$$ (see \cite[Lemma 5.1, (e)]{ChapmanMcKinnie:2018}).
Now, 
$$\alpha \frac{d \delta_1}{\delta_1} \wedge \dots \wedge \frac{d \delta_{n-2}}{\delta_{n-2}} \wedge \frac{d \beta}{\beta} \wedge \frac{d \gamma}{\gamma}=$$
$$\alpha \frac{d \delta_1}{\delta_1} \wedge \dots \wedge \frac{d \delta_{n-2}}{\delta_{n-2}} \wedge \frac{d \beta \gamma^{-1}}{\beta \gamma^{-1}} \wedge \frac{d (t^p \gamma+\beta)}{t^p \gamma+\beta}=$$
$$\alpha \frac{d \delta_1}{\delta_1} \wedge \dots \wedge \frac{d \delta_{n-2}}{\delta_{n-2}} \wedge \frac{d \beta}{\beta} \wedge \frac{d (t^p \gamma+\beta)}{t^p \gamma+\beta}-\alpha \frac{d \delta_1}{\delta_1} \wedge \dots \wedge \frac{d \delta_{n-2}}{\delta_{n-2}} \wedge \frac{d \gamma}{\gamma} \wedge \frac{d (t^p \gamma+\beta)}{t^p \gamma+\beta}.$$
Since $t^p \gamma+\beta$ is a factor in $H_p^1(F)$ of $\alpha \frac{d \delta_1}{\delta_1} \wedge \dots \wedge \frac{d \delta_{n-2}}{\delta_{n-2}} \wedge \frac{d \beta}{\beta}$, we have
$$\alpha \frac{d \delta_1}{\delta_1} \wedge \dots \wedge \frac{d \delta_{n-2}}{\delta_{n-2}} \wedge \frac{d \beta}{\beta} \wedge \frac{d (t^p \gamma+\beta)}{t^p \gamma+\beta}=(t^p \gamma+\beta) \tau \wedge \frac{d(t^p \gamma+\beta)}{t^p \gamma+\beta}$$
for some $\tau \in \nu_F(n-1)$.
As $(t^p \gamma+\beta) \frac{d(t^p \gamma+\beta)}{t^p \gamma+\beta}= d(t^p\gamma +\beta)$, it is  trivial  in $H_p^2(F)$. Hence  $\alpha \frac{d \delta_1}{\delta_1} \wedge \dots \wedge \frac{d \delta_{n-2}}{\delta_{n-2}} \wedge \frac{d \beta}{\beta} \wedge \frac{d (t^p \gamma+\beta)}{t^p \gamma+\beta}=0$.
Similarly, since  $t^p \gamma+\beta$ is a factor in $H_p^{1}(F)$ of $\alpha \frac{d\gamma}{\gamma}$, we have $\alpha \frac{d \delta_1}{\delta_1} \wedge \dots \wedge \frac{d \delta_{n-2}}{\delta_{n-2}} \wedge \frac{d \gamma}{\gamma} \wedge \frac{d (t^p \gamma+\beta)}{t^p \gamma+\beta}=0$.
Therefore  $\alpha \frac{d \delta_1}{\delta_1} \wedge \dots \wedge \frac{d \delta_{n-2}}{\delta_{n-2}} \wedge \frac{d \beta}{\beta} \wedge \frac{d \gamma}{\gamma}=0$ in $H_p^{n+1}(F)$ as required.
\end{proof}

\begin{cor}\label{maincor}
Let  $$\pi=\alpha \frac{d \delta_1}{\delta_1} \wedge \dots \wedge \frac{d \delta_{n-2}}{\delta_{n-2}} \wedge \frac{d \beta}{\beta} \quad \textrm{ and } \quad\omega=\alpha \frac{d \delta_1}{\delta_1} \wedge \dots \wedge \frac{d \delta_{n-2}}{\delta_{n-2}} \wedge \frac{d \gamma}{\gamma}$$ 
be  two separably $(n-1)$-linked symbols in $H_p^n(F)$.
If $\pi$ and $\omega$ are totally separably 1-linked then the class of $$\alpha \frac{d \delta_1}{\delta_1} \wedge \dots \wedge \frac{d \delta_{n-2}}{\delta_{n-2}} \wedge \frac{d \beta}{\beta} \wedge \frac{d\gamma}{\gamma}$$ in $H_p^{n+1}(F)$ is trivial.
\end{cor}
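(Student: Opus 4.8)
The plan is to reduce the statement directly to \Pref{main}: once the hypothesis of that proposition is verified, its conclusion is exactly what is claimed here. So set $t = \alpha + \frac{(\alpha-\beta)}{\gamma}$ as in \Pref{main}. The entire content of the proof will then be to check that the class of $t^p\gamma+\beta$ in $H_p^1(F)$ is a factor of $\pi$.

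First I would observe that $t^p\gamma+\beta$ is automatically a factor in $H_p^1(F)$ of $\omega$. Indeed, \Lref{commonslot} asserts that the symbol $p$-algebra $[\alpha,\gamma)_{p,F}$ contains the \'etale extension $F[x:x^p-x=t^p\gamma+\beta]$, which amounts to saying that the degree-two symbol $\alpha\frac{d\gamma}{\gamma}$ can be rewritten with the class of $t^p\gamma+\beta$ occupying its $H_p^1(F)$-slot. Wedging this rewriting with the common logarithmic part $\frac{d\delta_1}{\delta_1}\wedge\dots\wedge\frac{d\delta_{n-2}}{\delta_{n-2}}$ then exhibits $t^p\gamma+\beta$ as a factor in $H_p^1(F)$ of $\omega$ itself.

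Next I would invoke the total separable $1$-linkage hypothesis. By assumption $\pi$ and $\omega$ share all factors in $H_p^1(F)$, so the class of $t^p\gamma+\beta$, being a factor of $\omega$, is also a factor of $\pi$. This is precisely the hypothesis of \Pref{main}, and applying that proposition yields that $\alpha\frac{d\delta_1}{\delta_1}\wedge\dots\wedge\frac{d\delta_{n-2}}{\delta_{n-2}}\wedge\frac{d\beta}{\beta}\wedge\frac{d\gamma}{\gamma}$ is trivial in $H_p^{n+1}(F)$, as required.

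The argument is short because the substantial work was already carried out in \Pref{main}; the only step demanding any care is the passage from the degree-two formulation of \Lref{commonslot}, stated in terms of symbol algebras and \'etale subextensions, to the degree-$n$ notion of a common factor in $H_p^1(F)$. I expect this to be the main, though mild, obstacle, and it is resolved exactly as in the proof of \Pref{main}, by wedging the degree-two identity with the shared $(n-2)$-fold logarithmic factor.
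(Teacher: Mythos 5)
Your proposal is correct and is exactly the paper's intended derivation: the paper states \cref{maincor} without a separate proof because it follows from \cref{commonslot} (exhibiting the class of $t^p\gamma+\beta$ as a factor of $\omega$, by wedging the degree-two identity with the common logarithmic part), the total separable $1$-linkage hypothesis (transferring that factor to $\pi$), and then \cref{main} --- precisely your chain of reasoning. The only point glossed over (by the paper as well) is the degenerate case $t^p\gamma+\beta=0$, where the factor claim is vacuous but the conclusion is immediate since then $d\beta\wedge d\gamma=0$, as noted at the start of the proof of \cref{main}.
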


When $p=2$, by the identification of the symbols with quadratic $n$-fold Pfister forms, we obtain the following results:

\begin{prop} 
Assume $p=2$.
Let  $$\pi=\langle \! \langle \beta,\delta_{n-2},\dots,\delta_1,\alpha]\!]\quad \textrm{ and }\quad \omega=\langle \! \langle \gamma,\delta_{n-2},\dots,\delta_1,\alpha]\!]$$ 
be two separably $(n-1)$-linked  $n$-fold quadratic Pfister forms over $F$ and let $t=  \alpha +\frac{(\alpha-\beta)}{\gamma}$.
If the $1$-fold Pfister form $\langle \! \langle t^p\gamma +\beta]\!]$ is a factor of $\pi$, then $\langle \! \langle \beta,\gamma,\delta_{n-2},\dots,\delta_1,\alpha]\!]$ is trivial. In particular, $\pi$ and $\omega$ are inseparably $(n-1)$-linked.
\end{prop}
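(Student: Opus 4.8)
The plan is to deduce this statement directly from \Pref{main} by passing through the isomorphism $H_2^n(F)\cong I_q^nF/I_q^{n+1}F$ recorded in the preliminaries, and then to extract the linkage consequence. First I would set up the dictionary between the two formulations. Under the isomorphism sending $\alpha\frac{d\delta_1}{\delta_1}\wedge\dots\wedge\frac{d\delta_{n-2}}{\delta_{n-2}}\wedge\frac{d\beta}{\beta}$ to $\langle\!\langle\beta,\delta_{n-2},\dots,\delta_1,\alpha]\!]$ (using that the bilinear slots may be freely permuted), the symbols $\pi$ and $\omega$ of \Pref{main} correspond exactly to the quadratic Pfister forms $\pi$ and $\omega$ of the present statement, and separable $(n-1)$-linkage of the symbols matches the shared quadratic $(n-1)$-fold factor $\langle\!\langle\delta_{n-2},\dots,\delta_1,\alpha]\!]$. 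The one remaining hypothesis needs translating: a $1$-fold quadratic Pfister form $\langle\!\langle t^p\gamma+\beta]\!]=[1,t^p\gamma+\beta]$ corresponds to the class of $t^p\gamma+\beta$ in $H_2^1(F)$, and the statement ``$\langle\!\langle t^p\gamma+\beta]\!]$ is a factor of $\pi$'' says precisely that $\pi$ can be written with $t^p\gamma+\beta$ in its quadratic slot, i.e. that $t^p\gamma+\beta$ is a factor in $H_2^1(F)$ of the symbol $\pi$. This is exactly the hypothesis of \Pref{main}.

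With the dictionary in place, \Pref{main} applies verbatim and shows that the class of $\alpha\frac{d\delta_1}{\delta_1}\wedge\dots\wedge\frac{d\delta_{n-2}}{\delta_{n-2}}\wedge\frac{d\beta}{\beta}\wedge\frac{d\gamma}{\gamma}$ in $H_2^{n+1}(F)$ is trivial. Transporting this back along $H_2^{n+1}(F)\cong I_q^{n+1}F/I_q^{n+2}F$, the class is the image of $\langle\!\langle\beta,\gamma,\delta_{n-2},\dots,\delta_1,\alpha]\!]$, so its triviality means this $(n+1)$-fold quadratic Pfister form lies in $I_q^{n+2}F$. Since a quadratic Pfister form is either anisotropic or hyperbolic, and an anisotropic form in $I_q^{n+2}F$ has dimension at least $2^{n+2}$ by the Arason--Pfister Hauptsatz while our form has dimension $2^{n+1}$, the form must be hyperbolic. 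Thus $\langle\!\langle\beta,\gamma,\delta_{n-2},\dots,\delta_1,\alpha]\!]$ is trivial, which gives the first assertion.

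For the final ``in particular'' I would write $\rho=\langle\!\langle\delta_{n-2},\dots,\delta_1,\alpha]\!]$ for the common quadratic $(n-1)$-fold factor, so that $\pi=\langle\!\langle\beta\rangle\!\rangle\otimes\rho$ and $\omega=\langle\!\langle\gamma\rangle\!\rangle\otimes\rho$, and observe that the triviality just proved reads as the hyperbolicity of $\langle\!\langle\beta,\gamma\rangle\!\rangle\otimes\rho$. The task is to convert this single relation, together with the shared quadratic factor, into a common bilinear $(n-1)$-fold Pfister factor of $\pi$ and $\omega$. I expect this to be the main obstacle: the naive bilinear factors $\langle\!\langle\beta,\delta_{n-2},\dots,\delta_1\rangle\!\rangle$ of $\pi$ and $\langle\!\langle\gamma,\delta_{n-2},\dots,\delta_1\rangle\!\rangle$ of $\omega$ differ in their first slot and need not coincide, so the hyperbolicity must be used genuinely. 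The plan is to exploit roundness of Pfister forms: since $\langle\!\langle\beta,\gamma\rangle\!\rangle\otimes\rho=\langle\!\langle\gamma\rangle\!\rangle\otimes\pi=\langle\!\langle\beta\rangle\!\rangle\otimes\omega$, hyperbolicity forces $\gamma\pi\cong\pi$ and $\beta\omega\cong\omega$, whence (assuming $\pi$ anisotropic, the split case being immediate) $\gamma$ is a represented value of $\pi$ and $\beta$ of $\omega$. Tracing such a value back through the decomposition $\pi=\langle\!\langle\beta,\delta_{n-2},\dots,\delta_1\rangle\!\rangle\otimes[1,\alpha]$ to its bilinear part should let one rewrite the bilinear $(n-1)$-fold factor of $\pi$ so as to acquire a slot compatible with $\omega$, producing the desired common factor. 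Making this precise is essentially a common-slot/chain-lemma argument adapted to the quadratic-over-bilinear setting in characteristic $2$, and is where the real work lies; alternatively one may invoke the known equivalence between triviality of the composite $(n+1)$-fold Pfister form and inseparable $(n-1)$-linkage for forms sharing a quadratic $(n-1)$-fold factor.
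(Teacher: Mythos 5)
Your treatment of the first assertion is correct and is what the paper intends: transport \cref{main} through the isomorphism $H_2^n(F)\cong I_q^nF/I_q^{n+1}F$, and note that a $2^{n+1}$-dimensional quadratic Pfister form lying in $I_q^{n+2}F$ must be hyperbolic by the Arason--Pfister Hauptsatz. The paper leaves this dictionary implicit (it only says ``by the identification of the symbols with quadratic $n$-fold Pfister forms''), so your version is, if anything, more complete on this point.

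For the ``in particular'' clause, however, your primary argument has exactly the gap you yourself flag. Roundness does give that $\gamma$ is represented by $\pi$ and $\beta$ by $\omega$ once $\langle\!\langle\beta,\gamma\rangle\!\rangle\otimes\rho$ is hyperbolic, but passing from these represented values to a common \emph{bilinear} $(n-1)$-fold Pfister factor of $\pi$ and $\omega$ is not a routine common-slot manipulation: it is precisely the nontrivial direction of the equivalence proved in \cite{ChapmanGilatVishne:2017}, namely that for two $n$-fold quadratic Pfister forms sharing a quadratic $(n-1)$-fold factor, hyperbolicity of $\langle\!\langle\beta,\gamma,\delta_{n-2},\dots,\delta_1,\alpha]\!]$ holds if and only if the forms are inseparably $(n-1)$-linked. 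The paper's entire proof of this proposition consists of citing that equivalence --- i.e., exactly the ``alternative'' you mention in your final sentence. So the correct completion of your argument is to drop the roundness sketch and invoke that known result; as written, the sketch is a plan for reproving a theorem of \cite{ChapmanGilatVishne:2017}, not a proof.
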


\begin{proof}
By \cite{ChapmanGilatVishne:2017}, the $(n+1)$-fold Pfister form $\langle \! \langle \beta,\gamma,\delta_{n-1},\dots,\delta_2,\alpha]\!]$ is trivial if and only if $\pi$ and $\omega$ are inseparably $(n-1)$-linked.
\end{proof}


\begin{cor} Assume $p=2$. If a pair of separably $(n-1)$-linked  $n$-fold quadratic Pfister forms over $F$ are totally separably $1$-linked then they are also inseparably $(n-1)$-linked. In particular, if a pair of $n$-fold quadratic Pfister forms  over $F$ are totally separably $(n-1)$-linked then they are  inseparably $(n-1)$-linked.
\end{cor}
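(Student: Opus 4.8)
The plan is to deduce both assertions from the preceding characteristic-$2$ proposition (the translation of \cref{main}), whose hypothesis requires a single $1$-fold quadratic Pfister form to be a factor of $\pi$; the role of the total-linkage assumptions is precisely to supply that factor. First I would fix a normal form. Since $\pi$ and $\omega$ are separably $(n-1)$-linked, they share a common $(n-1)$-fold quadratic Pfister factor $\sigma=\langle \! \langle \delta_{n-2},\dots,\delta_1,\alpha]\!]$, so I may write $\pi=\langle \! \langle \beta \rangle \! \rangle \otimes \sigma=\langle \! \langle \beta,\delta_{n-2},\dots,\delta_1,\alpha]\!]$ and $\omega=\langle \! \langle \gamma \rangle \! \rangle \otimes \sigma=\langle \! \langle \gamma,\delta_{n-2},\dots,\delta_1,\alpha]\!]$ for suitable $\beta,\gamma\in F^\times$. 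Setting $t=\alpha+\frac{(\alpha-\beta)}{\gamma}$, \cref{commonslot} shows that $t^p\gamma+\beta$ is a factor in $H_p^1(F)$ of $\omega$, which under the identification $H_2^1(F)\cong I_q^1F/I_q^2F$ means that the $1$-fold quadratic Pfister form $\langle \! \langle t^p\gamma+\beta]\!]$ is a factor of $\omega$.

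Now I would invoke total separable $1$-linkage: since $\langle \! \langle t^p\gamma+\beta]\!]$ is a factor of $\omega$, it is also a factor of $\pi$. At this point the hypothesis of the preceding proposition is met, so $\langle \! \langle \beta,\gamma,\delta_{n-2},\dots,\delta_1,\alpha]\!]$ is trivial, and therefore $\pi$ and $\omega$ are inseparably $(n-1)$-linked. This proves the first assertion.

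For the ``in particular'' clause I would show that total separable $(n-1)$-linkage implies both hypotheses of the first assertion. Separable $(n-1)$-linkage is immediate, since $\pi$ possesses an $(n-1)$-fold quadratic Pfister factor (drop any one bilinear slot) and this factor is shared by $\omega$ by assumption. For total separable $1$-linkage I would use the elementary observation that every $1$-fold quadratic Pfister factor $\psi$ of an $n$-fold quadratic Pfister form $\rho$ extends to an $(n-1)$-fold quadratic Pfister factor containing it: writing $\rho=\langle \! \langle a_1,\dots,a_{n-1} \rangle \! \rangle \otimes \psi$, the form $\Psi=\langle \! \langle a_1,\dots,a_{n-2} \rangle \! \rangle \otimes \psi$ is such a factor, with $\psi$ a factor of $\Psi$. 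Given a $1$-fold quadratic Pfister factor $\psi$ of $\pi$, choose such a $\Psi$; by total separable $(n-1)$-linkage $\Psi$ is a factor of $\omega$, and since a factor of a factor is again a factor (tensor products of bilinear Pfister forms are bilinear Pfister forms), $\psi$ is a factor of $\omega$. The symmetric argument gives the reverse implication, so $\pi$ and $\omega$ are totally separably $1$-linked, and the first assertion applies.

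The genuine content of the corollary lies entirely in \cref{main} and its characteristic-$2$ translation; the work here is the bookkeeping of Pfister factors. I expect the only points requiring care to be the passage between ``factor in $H_p^1(F)$'' and ``$1$-fold quadratic Pfister factor'', and the verification that the factor-extension and transitivity manipulations respect the bilinear/quadratic distinction peculiar to characteristic $2$ — in particular that the complementary factors produced above are genuinely bilinear Pfister forms, so that $\Psi$ and the transferred $\psi$ remain quadratic Pfister factors.
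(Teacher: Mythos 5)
Your proposal is correct and takes essentially the same route as the paper: the corollary is meant to follow from the preceding characteristic-$2$ proposition, with \cref{commonslot} supplying the $1$-fold quadratic Pfister factor $\langle\!\langle t^2\gamma+\beta]\!]$ of $\omega$, which total separable $1$-linkage transfers to $\pi$, exactly as you argue. Your additional bookkeeping for the ``in particular'' clause (that total separable $(n-1)$-linkage yields both separable $(n-1)$-linkage and, by extending any $1$-fold quadratic factor to an $(n-1)$-fold one and using transitivity of factors, total separable $1$-linkage) is the intended, standard argument that the paper leaves implicit.
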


\begin{rem}
A similar  result to \cref{maincor} holds more straight-forwardly for Milnor $K$-groups. 
Let $p$ be a prime integer, $n$ a positive integer and $F$ an arbitrary field. If $p=2$ then further assume that $\sqrt{-1}\in F$. 
Then the following is trivial:
\begin{itemize}
\item If $\{\alpha\} \cup \theta \in K_n F/pK_n F$ has $\{\beta\}$ as a factor in $K_1 F/p K_1 F$ then $\{\alpha,\beta\} \cup \theta=0$ in $K_{n+1} F/p K_{n+1} F$.
\item Therefore, if $\{\alpha\} \cup \theta$ and $\{\beta\} \cup \theta$ in $K_n F/pK_n F$ are totally 1-linked then $\{\alpha,\beta\} \cup \theta=0$ in $K_{n+1} F/p K_{n+1} F$.
\end{itemize}
\end{rem}

\begin{rem} The result analogous  to \cref{maincor} for  inseparable linkage is also straight-forward.
For fields $F$ of characteristic $p>0$ and positive integer $n$,
\begin{itemize}
\item If $\omega \wedge \frac{d\beta}{\beta} \in H_p^n F$ has $\frac{d \gamma}{\gamma}$  in $\nu_F(1)$ as a factor then $\omega \wedge \frac{d \beta}{\beta} \wedge \frac{d \gamma}{\gamma}=0$ in $H_p^{n+1}(F)$.
\item Therefore, if $\omega \wedge \frac{d\beta}{\beta}$ and $\omega \wedge \frac{d\gamma}{\gamma}$ in $H_p^n F$ are totally inseparably 1-linked then $\omega \wedge \frac{d \beta}{\beta} \wedge \frac{d \gamma}{\gamma}=0$ in $H_p^{n+1}(F)$.
\end{itemize}
\end{rem}

\section{Totally Linked Quadratic Pfister Forms}\label{SectionTypes}
%

In \cite{ChapmanDolphinLaghribi} we considered whether total $1$-linkage of Pfister forms implied isometry. In general it does not. In this section, we consider whether total  $m$-linkage implies isometry.

\begin{lem}\label{wittindex} Let $n$ be an integer $\geq 2$ and   $m\in \{1,\dots,n-1\}$.
Let $\varphi$ be an  $n$-fold quadratic Pfister form, $\pi$ an  $m$-fold  quadratic Pfister form and $\theta$ an  $(m-1)$-fold quadratic Pfister form.
Assume $\theta$ is a common factor of $\varphi$ and $\pi$.
Then $i_W(\varphi \perp -\pi)=2^m$ if and only if $\pi$ is a factor of $\varphi$. Otherwise, $i_W(\varphi \perp -\pi)=2^{m-1}$.
\end{lem}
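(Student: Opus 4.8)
The plan is to exploit the common factor $\theta$ to collapse the problem to a single Witt-index question about a form divisible by $\theta$, and then to feed in roundness. Throughout I assume $\varphi$ is anisotropic; this is needed (for hyperbolic $\varphi$ one gets $i_W(\varphi\perp-\pi)=2^{n-1}$ in general), and it forces $\theta$, hence $c\theta$ and every subform of $\varphi$, to be anisotropic as well. Since $\theta$ is a factor of $\varphi$, I can write $\varphi=B'\otimes\theta$ for a bilinear $(n-m+1)$-fold Pfister form $B'$; splitting off the leading $\langle 1\rangle$ of $B'$ gives $\varphi=\theta\perp\varphi'$ with $\varphi'$ again divisible by $\theta$. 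Likewise, $\theta$ being a factor of $\pi$ means $\pi=\langle 1,c\rangle\otimes\theta=\theta\perp c\theta$ for some $c\in F^\times$.

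First I would perform the cancellation. Because the \emph{same} form $\theta$ sits inside both $\varphi$ and $\pi$, the form $\varphi\perp-\pi$ contains the hyperbolic form $\theta\perp-\theta$ of dimension $2^m$. Splitting this off and using that adjoining hyperbolic planes merely adds to the Witt index, I obtain
$$i_W(\varphi\perp-\pi)=2^{m-1}+i_W(\varphi'\perp-c\theta).$$
Everything therefore reduces to showing that $i_W(\varphi'\perp-c\theta)$ equals $2^{m-1}$ when $\pi$ is a factor of $\varphi$ and $0$ otherwise.

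Next I would analyse $\varphi'\perp-c\theta$, a form of dimension $2^n$. If it is anisotropic the index is $0$. If it is isotropic, then $\varphi'$ and $c\theta$ represent a common nonzero value $d$; by roundness of the anisotropic Pfister form $\theta$ one has $c\theta\cong d\theta$, and since $\varphi'$ is divisible by $\theta$ and represents $d$, the form $d\theta\cong c\theta$ is a subform of $\varphi'$. Writing $\varphi'=c\theta\perp\varphi''$ and cancelling shows $\varphi'\perp-c\theta$ is Witt equivalent to the anisotropic form $\varphi''$, so its index is exactly $2^{m-1}$. Hence $i_W(\varphi'\perp-c\theta)\in\{0,2^{m-1}\}$, with the value $2^{m-1}$ occurring precisely when $c\theta$ is a subform of $\varphi'$.

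Finally I would identify this subform condition with factoriality. By Witt cancellation, $c\theta\subseteq\varphi'$ if and only if $\pi=\theta\perp c\theta\subseteq\varphi=\theta\perp\varphi'$; and for Pfister forms being a subform is equivalent to being a factor, so $c\theta\subseteq\varphi'$ if and only if $\pi\mid\varphi$. Combined with the displayed identity this gives $i_W(\varphi\perp-\pi)=2^m$ when $\pi\mid\varphi$ and $2^{m-1}$ otherwise. The cancellation bookkeeping is routine; the real work, and the main obstacle, lies in the two standard facts about anisotropic quadratic Pfister forms in characteristic $2$ that drive the trichotomy: that a form divisible by $\theta$ contains $d\theta$ as a subform whenever it represents $d$ (roundness), and that a Pfister subform of a Pfister form is automatically a factor. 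Both are available in the characteristic-two quadratic form literature, and the crucial ``only $0$ or $2^{m-1}$'' dichotomy rests entirely on them.
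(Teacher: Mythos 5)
Your proof is correct, and it takes a genuinely different route from the paper's. The paper's proof is three lines long: it cites \cite[Corollary 24.3]{EKM} for the fact that $i_W(\varphi\perp-\pi)$ must be a power of $2$, notes that the common factor $\theta$ forces $i_W(\varphi\perp-\pi)\geq 2^{m-1}$ while $\dim \pi=2^m$ caps it at $2^m$, and then finishes exactly as you do, with the fact that a Pfister subform of an anisotropic Pfister form is automatically a factor. You avoid the appeal to the power-of-two theorem entirely: the decomposition $\varphi=\theta\perp\varphi'$, $\pi=\theta\perp c\theta$, the cancellation of $\theta\perp-\theta$, and the roundness argument showing $i_W(\varphi'\perp-c\theta)\in\{0,2^{m-1}\}$ reprove the needed special case of that theorem by hand, exploiting the extra structure that a common factor $\theta$ is available. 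What your route buys: it is self-contained modulo two standard facts (roundness of $\theta$ together with the splitting-off of $\langle d\rangle$ from a diagonal bilinear form representing $d$, and subform-implies-factor for Pfister forms); it makes the easy converse direction ($\pi\mid\varphi\Rightarrow i_W(\varphi\perp-\pi)=2^m$) explicit, which the paper leaves tacit; and it correctly isolates the anisotropy hypothesis on $\varphi$, which the lemma as stated omits but which is genuinely needed (for hyperbolic $\varphi$ and $m<n-1$ the conclusion fails) and which holds in all of the paper's applications, where the forms are non-hyperbolic. One caveat: in characteristic $2$ your decomposition $\pi=\theta\perp c\theta$ presupposes that $\theta$ exists as a nonsingular quadratic form, so the case $m=1$, where $\theta$ is a $0$-fold quadratic Pfister form, falls outside your argument; that case is trivial, though, since both $\varphi$ and $\pi$ represent $1$, giving $1\leq i_W(\varphi\perp-\pi)\leq\dim\pi=2$, with the value $2$ attained if and only if $\pi\subseteq\varphi$, if and only if $\pi\mid\varphi$.
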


\begin{proof}
By \cite[Corollary 24.3]{EKM}, $i_W(\varphi \perp -\pi)$ must be a power of 2.
Since $\theta$ is a factor of $\varphi$, $i_W(\varphi \perp -\pi) \geq 2^{m-1}$. Therefore, the only other possible value is $2^m$, in which case $\pi$ is a subform of $\varphi$ and therefore a factor of $\varphi$.
\end{proof}

\begin{lem}\label{func}
Let $n$ be an integer $\geq 2$  and   $m\in \{1,\dots,n-1\}$.
Let $\varphi$ and $\psi$ be two non-hyperbolic, separably $(n-1)$-linked and totally separably $m$-linked  $n$-fold quadratic Pfister forms over $F$.  Let $\pi$ be an $(m+1)$-fold quadratic Pfister form such that $\pi$ is a factor of $\varphi$ but not $\psi$. Then there exists a field extension $L$ such that $\pi_L$ is a factor of $\psi_L$ and $\varphi_L$ and $\psi_L$ are neither isometric nor hyperbolic. 
\end{lem}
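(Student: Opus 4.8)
The plan is to take $L$ to be the function field $F(\rho)$ of the quadric associated with $\rho:=\an{(\psi\perp-\pi)}$, the anisotropic part of $\psi\perp-\pi$, and then to verify the three required properties by comparing $\rho$ with the Pfister forms in play. First I would record an $m$-fold common factor of $\pi$ and $\psi$: since $\pi$ is an $(m+1)$-fold factor of $\varphi$, it admits an $m$-fold quadratic Pfister factor $\theta$, which is then a factor of $\varphi$; total separable $m$-linkage makes $\theta$ a factor of $\psi$ as well, so $\theta$ is a common factor of $\pi$ and $\psi$. As $\pi$ is not a factor of $\psi$, \cref{wittindex} gives $i_W(\psi\perp-\pi)=2^m$, whence $\dim\rho=2^n+2^{m+1}-2\cdot 2^m=2^n$.

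To see that $\pi_L$ becomes a factor of $\psi_L$, observe that $\rho$ (anisotropic of dimension $2^n\geq 2$) is isotropic over its own function field $L$, so $i_W\big((\psi\perp-\pi)_L\big)=i_W(\rho_L)+2^m>2^m$. Applying \cref{wittindex} now over $L$, with the common factor $\theta_L$ of $\pi_L$ and $\psi_L$, forces this Witt index to equal $2^{m+1}$, and hence $\pi_L$ is a factor of $\psi_L$.

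For the two remaining properties I would use the standard criterion (\cite{EKM}) that an anisotropic quadratic Pfister form becomes hyperbolic over $F(\rho)$ precisely when $\rho$ is similar to a subform of it; since $\dim\rho=2^n=\dim\varphi=\dim\psi$, this amounts to $\rho$ being similar to the form in question. Thus $\psi_L$ (resp.\ $\varphi_L$) is hyperbolic iff $\rho\cong c\psi$ (resp.\ $\rho\cong c'\varphi$) for a scalar. Moreover, using that $\varphi$ and $\psi$ are separably $(n-1)$-linked, classical linkage theory (\cite{EKM}, \cite{ChapmanGilatVishne:2017}) writes $\an{(\varphi\perp-\psi)}\cong c''\mu$ for an anisotropic $n$-fold quadratic Pfister form $\mu$, so that $\varphi_L\cong\psi_L$ iff $\rho\cong d\mu$. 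In this way all three conditions reduce to the single assertion that $\rho$ is not similar to any of the $n$-fold Pfister forms $\psi$, $\varphi$, $\mu$.

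This last assertion is the crux, and it is settled by a filtration-degree comparison in the graded Witt group (equivalently, in Kato--Milne cohomology). The class $[\rho]=[\psi]-[\pi]$ lies in $I_q^{m+1}$, and $[\rho]\equiv-[\pi]\pmod{I_q^{m+2}}$ because $[\psi]\in I_q^{n}\subseteq I_q^{m+2}$; as $\pi$ is anisotropic, its symbol is nonzero in $I_q^{m+1}/I_q^{m+2}$, so $[\rho]$ has leading degree exactly $m+1$. By contrast $c\psi$, $c'\varphi$ and $d\mu$ are scalar multiples of $n$-fold Pfister forms, so their classes lie in $I_q^{n}\subseteq I_q^{m+2}$ and vanish in $I_q^{m+1}/I_q^{m+2}$. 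Since $m+1<n$ these leading degrees differ, so $[\rho]$ equals none of the three classes; as all forms involved are anisotropic of the common dimension $2^n$, none of the required similarities can hold. Hence $\psi_L$ and $\varphi_L$ are anisotropic (so non-hyperbolic) and non-isometric, as desired. The main obstacle is exactly this comparison: it depends on first identifying $\an{(\varphi\perp-\psi)}$ as a scalar multiple of an $n$-fold Pfister form through $(n-1)$-linkage, and it is here that the fold gap $m+1<n$ is genuinely used (when $m+1=n$ one has $\pi\cong\varphi$ and the statement degenerates).
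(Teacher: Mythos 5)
Your proposal is correct and takes essentially the same route as the paper's own proof: the same choice of $L$ as the function field of the $2^n$-dimensional anisotropic part of $\psi \perp -\pi$ (obtained from \cref{wittindex} together with the common $m$-fold factor supplied by total $m$-linkage), the same subform criterion \cite[Corollary 22.5]{EKM} plus dimension count, and the same filtration-degree argument in $I_q^{m+1}F/I_q^{m+2}F$ (the paper's ``$m$th cohomological invariant of $\theta$'') to rule out similarity of that form to $\varphi$, $\psi$, or the anisotropic $n$-fold Pfister form representing $\varphi\perp\psi$ modulo $I_q^{n+1}F$, which controls isometry via $(n-1)$-linkage. If anything, you are more explicit than the paper on two points it leaves implicit: the verification that $\pi_L$ is a factor of $\psi_L$ (via \cref{wittindex} over $L$), and the fact that the invariant comparison in both versions genuinely requires $m\leq n-2$.
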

\begin{proof}
Since $\varphi$ and $\psi$ are separably $(n-1)$-linked, the form $\varphi \perp \psi$ is congruent mod $I_q^{n+1} F$ to some anisotropic $n$-fold Pfister form $\phi$. Let $\pi_0$ be an  $m$-fold quadratic Pfister factor of $\pi$. Since $\varphi$ and $\psi$ are totally separably  $m$-linked, $\pi_0$ is a common factor of both forms.

By  \cref{wittindex},  $\psi \perp -\pi$ is Witt equivalent to some anisotropic $2^n$-dimensional form $\theta$. Write $L=F(\theta)$ for the function field of $\theta$ over $F$. If one of the  forms $\varphi_L, \psi_L$ and $\phi_L$ were  hyperbolic, then $\theta$ would be similar to a  subform of the form by \cite[Corollary 22.5]{EKM}. However,  since the forms are of the same dimension, this would imply that  $\theta$ is similar to an $n$-fold Pfister form. This is impossible because the $m$th cohomological invariant of $\theta$ is nontrivial.  It follows that $\psi_L$ and $\varphi_L$ are not isometric as $\phi_L$ is not hyperbolic.
\end{proof}

\begin{thm}\label{goup}
Let $n$ be an integer $\geq 2$  and   $m\in \{1,\dots,n-1\}$.
Let $\varphi$ and $\psi$ be two non-hyperbolic, separably $(n-1)$-linked and totally separably $m$-linked quadratic $n$-fold Pfister forms over $F$. Then there exists a field extension $K$ of $F$ such that $\varphi_K$ and $\psi_K$ are totally separably $(m+1)$-linked but not isometric nor hyperbolic.
\end{thm}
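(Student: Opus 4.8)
The plan is to prove the theorem by an iterated generic construction: I would build an ascending tower of field extensions in which, stage by stage, \cref{func} converts every ``unmatched'' $(m+1)$-fold Pfister factor into a common factor, and then pass to the direct limit. Throughout I assume $\varphi\not\cong\psi$, as the desired conclusion requires it. Call a $k$-fold Pfister form over a field $K$ an \emph{obstruction} for the pair $(\varphi_K,\psi_K)$ if it is a factor of exactly one of $\varphi_K,\psi_K$; thus $\varphi_K$ and $\psi_K$ are totally separably $k$-linked precisely when they have no $k$-fold obstruction. Before constructing anything, I would record which properties travel up an arbitrary extension. Separable $(n-1)$-linkage is preserved, since a common $(n-1)$-fold Pfister factor remains a common factor after scalar extension. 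Non-hyperbolicity and non-isometry are preserved by a single application of \cref{func} (that is its output) and are moreover \emph{finitary}: a form becomes hyperbolic, or two forms become isometric, over a directed union $\bigcup_i K_i$ if and only if this already happens over some $K_i$, because the governing Witt index is a finite quantity realised over a finitely generated subextension. The same finitary principle applies to ``being a factor'', which is detected by a Witt-index condition (\cref{wittindex}).

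Next I would set up the tower. Given $K_i$, enumerate all $(m+1)$-fold Pfister obstructions over $K_i$ and apply \cref{func} to each in turn, composing the resulting function-field extensions (transfinitely if necessary) to form $K_{i+1}$; since common factors persist under extension, no obstruction already resolved is undone, so every $(m+1)$-fold obstruction over $K_i$ is a common factor over $K_{i+1}$. Put $K=\bigcup_i K_i$. Any $(m+1)$-fold Pfister form $\rho$ over $K$ is defined over some $K_i$; by the finitary principle, if $\rho_K$ is a factor of either $\varphi_K$ or $\psi_K$ then $\rho$ is already a factor of that form over some $K_j$, where it was either already common or turned common at stage $K_{j+1}$, and commonness persists to $K$. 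Hence $\varphi_K,\psi_K$ have no $(m+1)$-fold obstruction, i.e.\ they are totally separably $(m+1)$-linked, and the finitary principle also keeps them non-isometric and non-hyperbolic.

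The real work, and the step I expect to be the main obstacle, is to guarantee that total separable $m$-linkage survives the whole tower, for otherwise \cref{func} cannot be reapplied at later stages. Here the finitary principle already does most of the bookkeeping: an $m$-fold obstruction over a limit field would descend to an $m$-fold obstruction over some earlier stage, so preservation at limit ordinals is automatic once it holds below, and the entire difficulty collapses to a single successor step -- the extension $L=K(\theta)$ of \cref{func}, with $\theta$ the anisotropic part of $\psi\perp-\pi$. By finiteness again, a fresh $m$-fold obstruction over $L$ must be an $m$-fold Pfister factor newly acquired by one of $\varphi,\psi$ over $L$ but not by the other.

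To rule this out I would exploit the symmetry encoded in $\theta$. By the proof of \cref{func}, the $m$-fold factor $\pi_0$ of $\pi$ is a common factor of $\varphi$ and $\psi$ (this is exactly where total separable $m$-linkage over $K$ enters), and $\psi\perp-\pi$ is $\pi_0$-divisible, so its anisotropic part $\theta$ is $\pi_0$-divisible as well. Thus $\theta$ shares the factor $\pi_0$ with both $\varphi$ and $\psi$, and I would argue from this common divisibility that $K(\theta)$ cannot enlarge the set of $m$-fold factors of one form without simultaneously enlarging that of the other; the two sets therefore grow in lockstep and total separable $m$-linkage is maintained. Granting this, the construction preserves non-isometry, non-hyperbolicity, separable $(n-1)$-linkage and total separable $m$-linkage at every stage, so \cref{func} is always available, and the limit $K$ furnishes the required totally separably $(m+1)$-linked, non-isometric, non-hyperbolic pair. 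Everything outside this successor-step preservation is the standard generic-splitting bookkeeping combined with the finitary behaviour of Witt indices under directed unions.
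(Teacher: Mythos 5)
Your proposal is essentially the paper's own proof. The paper defines the two sets $S$ and $T$ of unmatched $(m+1)$-fold factors (your ``obstructions''), passes to the compositum of the function fields of the forms $\theta$ supplied by \cref{func} for each of them, notes that non-isometry, non-hyperbolicity and the resolution of all previously defined obstructions persist over this compositum, and then simply says ``using this construction inductively, we obtain the required field extension $K/F$'' --- which is exactly your tower $K_0\subseteq K_1\subseteq\cdots$ followed by the direct limit and the finitary descent of isometry, hyperbolicity and factor-ness to finite stages. Your preliminary normalisation $\varphi\not\cong\psi$ is also implicitly present in the paper (it is what makes the form $\phi$ in the proof of \cref{func} anisotropic). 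The one place where you go beyond the paper is in flagging that total separable $m$-linkage must be re-established over each $K_i$ before \cref{func} can be applied again; this is indeed required for the induction, but your proposed resolution --- that the $\pi_0$-divisibility of $\theta$ forces the $m$-fold factors of $\varphi$ and $\psi$ to ``grow in lockstep'' over $K_i(\theta)$ --- is not an argument, only a restatement of the desired conclusion, and it is far from clear that common divisibility by $\pi_0$ alone yields it. You should know, however, that the paper's proof is no more complete at this exact point: its ``inductively'' silently assumes that the hypotheses of \cref{func} (in particular total separable $m$-linkage) survive to $K_0$, $K_1$, and so on, which is precisely your unproven lockstep claim. So, measured against the paper's own proof, your proposal follows the same route and is incomplete in the same single place; turning the lockstep claim into an actual proof is what would be needed to make either argument fully rigorous.
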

\begin{proof}
 Let $S$ be the set of $(m+1)$-fold quadratic Pfister forms $\pi$ over $F$ such that $\pi$ is a factor of $\varphi$ but not of $\psi$.  
Then  as in \cref{func}, for each $\pi\in S$,  there exists a $2^n$-dimensional  quadratic form $\theta$ such that $\theta$ is Witt equivalent to $\pi\perp   \psi$. Let 
 $F_0$ be the compositum of the function fields of all such $\theta$. 
 
 Similarly, let $T$ be the set of $(m+1)$-fold quadratic Pfister forms $\pi'$ over $F$ such that $\pi$ is a factor of $\psi$ but not of $\varphi$.  
Again, as in \cref{func}, for each $\pi'\in T$,  there exists a $2^n$-dimensional  quadratic form $\theta'$ such that $\theta'$ is Witt equivalent to $\pi'\perp   \varphi$. Let $F'_0$ be the field compositum of the function fields of all such $\theta'$. 

Let $K_0$ be the compositum of $F_0$ and $F'_0$. Then by \cref{func}, $\varphi_{K_0}$ and $\psi_{K_0}$ are not isometric nor hyperbolic, and every $(m+1)$-fold Pfister form over $K_0$ defined over $F$ is a factor of $\varphi_{K_0}$ if and only if it is a factor of $\psi_{K_0}$.
Using this construction inductively, we obtain the required field extension $K/F$.
\end{proof}

\begin{lem}\label{subform}
Assume  $\operatorname{char}(F)=2$. Let $\theta$ be an anisotropic $n$-fold bilinear Pfister form over $F$ and let $\theta'$ be an $(n-1)$-fold bilinear Pfister form factor of $\theta$. Let $\beta$ be an element represented by $\theta$ but not by $\theta'$. Then $Q(\theta)= Q(\langle\!\langle \beta \rangle \! \rangle \otimes \theta')$.
\end{lem}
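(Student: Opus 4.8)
The plan is to reduce the claimed isometry of quadratic forms to an equality of value sets, and then to compute these value sets via the associated purely inseparable field extensions of $F^2$.

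Write $\theta' = \langle\!\langle b_1, \dots, b_{n-1}\rangle\!\rangle$. Since $\theta'$ is a factor of the $n$-fold form $\theta$, I may write $\theta = \langle\!\langle\alpha\rangle\!\rangle \otimes \theta' = \langle\!\langle \alpha, b_1, \dots, b_{n-1}\rangle\!\rangle$ for some $\alpha \in F^\times$. Both $Q(\theta)$ and $Q(\langle\!\langle\beta\rangle\!\rangle\otimes\theta')$ are totally singular quadratic forms of dimension $2^n$ (their polar forms vanish in characteristic $2$). I would first recall the standard classification of such forms: an anisotropic totally singular quadratic form is determined up to isometry by the $F^2$-subspace of $F$ consisting of the values it represents together with $0$, and conversely two anisotropic totally singular forms with the same value set are isometric (a change of $F^2$-basis of the common value space furnishes the isometry). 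Hence it suffices to prove that $Q(\langle\!\langle\beta\rangle\!\rangle\otimes\theta')$ is anisotropic and has the same value set as $Q(\theta)$.

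The key ingredient is the identification of the value set of $Q$ applied to a bilinear Pfister form: for $\langle\!\langle c_1, \dots, c_k\rangle\!\rangle$, the represented values together with $0$ form the subfield $F^2(c_1, \dots, c_k)$ of $F$ (each $c_i$ is quadratic over $F^2$ since $c_i^2 \in F^2$, and the value space is the $F^2$-span of the products $\prod_{i\in S} c_i$), and the form is anisotropic exactly when $[F^2(c_1, \dots, c_k) : F^2] = 2^k$. Applying this, and noting that $\theta'$ is anisotropic (being a subform of the anisotropic $\theta$, and $Q(\theta')$ is anisotropic since $Q(\theta')(v)=\theta'(v,v)$), I set $K_0 = F^2(b_1, \dots, b_{n-1})$, the value set of $Q(\theta')$, with $[K_0 : F^2] = 2^{n-1}$. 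The value set of $Q(\theta)$ is then $K_0(\alpha)$, and anisotropy of $\theta$ gives $[K_0(\alpha) : F^2] = 2^n$, so $[K_0(\alpha) : K_0] = 2$.

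Now the hypotheses translate cleanly: $\beta$ being represented by $\theta$ means $\beta \in K_0(\alpha)$, while $\beta$ not being represented by $\theta'$ means $\beta \notin K_0$ (in particular $\beta \neq 0$). Since $K_0(\alpha)/K_0$ has prime degree $2$ and $\beta \in K_0(\alpha) \setminus K_0$, it follows that $K_0(\beta) = K_0(\alpha)$. Consequently the value set of $Q(\langle\!\langle\beta\rangle\!\rangle\otimes\theta') = Q(\langle\!\langle b_1, \dots, b_{n-1}, \beta\rangle\!\rangle)$ equals $F^2(b_1, \dots, b_{n-1}, \beta) = K_0(\beta) = K_0(\alpha)$, which coincides with the value set of $Q(\theta)$ and has $F^2$-dimension $2^n$. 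The latter forces $\langle\!\langle\beta\rangle\!\rangle\otimes\theta'$ to be anisotropic, and the classification recalled above yields $Q(\theta) = Q(\langle\!\langle\beta\rangle\!\rangle\otimes\theta')$. I expect the only genuine content to be the dictionary between $Q$ of bilinear Pfister forms and multiquadratic purely inseparable extensions of $F^2$ (equivalently, the theory of quasi-Pfister forms), together with the classification of anisotropic totally singular forms by their value sets; once these standard characteristic-$2$ facts are in hand, the argument collapses to the observation that an extension of degree $2$ is generated by any element outside the base. The only point requiring care is matching the notions of \emph{represented by} and \emph{anisotropic} for the bilinear form $\theta$ with the corresponding statements for the totally singular quadratic form $Q(\theta)$, which hold because $Q(\theta)(v) = \theta(v,v)$.
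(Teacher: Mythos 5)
Your proof is correct and takes essentially the same route as the paper: the paper's proof is a one-line appeal to \cite[Proposition 10.4]{EKM}, which is exactly the classification of anisotropic totally singular quadratic forms by their $F^2$-value spaces that you invoke, together with the standard identification of the value set of a quasi-Pfister form with the field $F^2(c_1,\dots,c_k)$. Your degree-$2$ argument showing $F^2(b_1,\dots,b_{n-1},\beta)=F^2(b_1,\dots,b_{n-1},\alpha)$ is precisely the ``easy'' verification the paper leaves implicit.
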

\begin{proof}
This follows easily from \cite[Proposition 10.4]{EKM}.
\end{proof}

\begin{lem}\label{quasi}
Assume  $\operatorname{char}(F)=2$. Let $n$ be an integer $\geq 2$ and $m\in \{1,\dots,n-1\}$.
Let $\varphi$ be an anisotropic  $n$-fold quadratic  Pfister form, $\pi$ an anisotropic  $m$-fold  bilinear Pfister  form and $\theta$ an  $(m-1)$-fold bilinear Pfister form.
Assume $\theta$ is a common factor of $\varphi$ and $\pi$.
Then $i_W(\varphi \perp Q(\pi))=2^m$ if and only if $\pi$ is a factor of $\varphi$. Otherwise, $i_W(\varphi \perp Q(\pi))=2^{m-1}$.
\end{lem}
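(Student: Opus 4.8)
The plan is to mirror the proof of \cref{wittindex}, with the characteristic-$2$ phenomena (the quasilinearity of $Q(\pi)$) handled by factoring out the common bilinear factor $\theta$. Since $\theta$ is a bilinear factor of the quadratic Pfister form $\varphi$ and of the bilinear Pfister form $\pi$, I would first write $\varphi=\theta\otimes\psi'$ for some $(n-m+1)$-fold quadratic Pfister form $\psi'$ and $\pi=\theta\otimes\langle\!\langle a\rangle\!\rangle$ for some $a\in F^\times$. The key observation is that $Q(\pi)=\theta\otimes Q(\langle\!\langle a\rangle\!\rangle)$, so distributivity of the tensor product over $\perp$ gives
$$\varphi\perp Q(\pi)=\theta\otimes\bigl(\psi'\perp Q(\langle\!\langle a\rangle\!\rangle)\bigr).$$
Thus $\varphi\perp Q(\pi)$ is divisible by the anisotropic bilinear $(m-1)$-fold Pfister form $\theta$, and the whole argument reduces to reading off the Witt index of a $\theta$-divisible form.

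For the bounds I would argue as follows. Since $\psi'$ represents $1$, the totally singular form $Q(\theta)$ of dimension $2^{m-1}$ is a subform of $\varphi$, and it is visibly a subform of $Q(\pi)$ as well; matching these two copies on the diagonal produces a totally isotropic subspace of $\varphi\perp Q(\pi)$ of dimension $2^{m-1}$ which meets the radical of the polar form trivially (because $\varphi$ and $Q(\pi)$ are anisotropic), so $i_W(\varphi\perp Q(\pi))\geq 2^{m-1}$. Conversely, anisotropy of $\varphi$ forces any totally isotropic subspace to project injectively into the $2^m$-dimensional space of $Q(\pi)$, whence $i_W(\varphi\perp Q(\pi))\leq 2^m$.

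It then remains to rule out intermediate values and to identify the extremal case. Using the divisibility by $\theta$ together with the results of \cite{EKM} (cf.\ the use of \cite[Corollary 24.3]{EKM} in \cref{wittindex}), the anisotropic part of $\theta\otimes(\psi'\perp Q(\langle\!\langle a\rangle\!\rangle))$ is again divisible by $\theta$; separating off its regular part, which is nonsingular and $\theta$-divisible and hence of dimension divisible by $2\cdot 2^{m-1}=2^m$, shows that $i_W(\varphi\perp Q(\pi))$ is divisible by $2^{m-1}$. Combined with the bounds this forces $i_W(\varphi\perp Q(\pi))\in\{2^{m-1},2^m\}$. Finally, $i_W=2^m$ holds exactly when the whole of $Q(\pi)$ embeds as a totally singular subform of $\varphi$, and by the subform theory of (quasi-)Pfister forms in characteristic $2$ (\cref{subform} and \cite[Proposition 10.4]{EKM}) this occurs precisely when $\pi$ is a factor of $\varphi$; in all remaining cases $i_W=2^{m-1}$.

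I expect the delicate points to be twofold. First, extracting divisibility of $i_W$ by $2^{m-1}$ (rather than merely $2^{m-2}$) from divisibility of $\varphi\perp Q(\pi)$ by $\theta$: a naive dimension count only controls $2i_W$ modulo $2^{m-1}$, so one genuinely needs to separate the regular and quasilinear parts of the anisotropic form and exploit that a \emph{nonsingular} $\theta$-divisible form has dimension divisible by $2^m$. Second, upgrading ``$Q(\pi)$ is a totally singular subform of $\varphi$'' to ``$\pi$ is a bilinear factor of $\varphi$'', which is exactly where the characteristic-$2$ machinery relating quasi-Pfister subforms to bilinear Pfister factors is indispensable.
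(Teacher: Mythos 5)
Your steps 1--3 are sound, and they essentially reproduce what the paper obtains from its explicit computation: writing $\varphi=\theta\otimes(\langle 1\rangle\perp b)\otimes\rho$ and $\tau=\theta\otimes b\otimes\rho$, the paper shows $\varphi\perp Q(\theta)$ is isometric to $2^{m-1}$ hyperbolic planes plus $\tau\perp Q(\theta)$, which gives your bounds. But both of the points you yourself flag as delicate are genuine gaps, and they are exactly where the content of the lemma lies. First, the divisibility step: \cite[Corollary 24.3]{EKM} concerns the Witt index of $\varphi\perp-\pi$ for two \emph{nonsingular} quadratic Pfister forms and says nothing about the singular form $\varphi\perp Q(\pi)$. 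The statement you actually need --- that the anisotropic part of a possibly singular quadratic form divisible by the bilinear Pfister form $\theta$ is again $\theta$-divisible, so that its nonsingular part has dimension divisible by $2^m$ --- is a nontrivial theorem of the characteristic-$2$ theory that you neither prove nor correctly cite. The paper sidesteps this entirely: instead of excluding intermediate Witt indices abstractly, it shows directly that if $\tau\perp Q(\pi)$ is isotropic (equivalently, if $i_W(\varphi\perp Q(\pi))>2^{m-1}$) then $\pi$ is a factor of $\varphi$, which then forces $i_W=2^m$.

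Second, and more seriously, your final equivalence --- $Q(\pi)$ embeds as a totally singular subform of $\varphi$ precisely when $\pi$ is a bilinear factor of $\varphi$ --- is the heart of the lemma, and \cref{subform} together with \cite[Proposition 10.4]{EKM} cannot deliver it: those results live entirely in the world of bilinear and quasi-Pfister forms and produce no factorization of the nonsingular form $\varphi$. The paper's proof of precisely this implication is the bulk of its argument: from isotropy of $\tau\perp Q(\pi)$ it extracts an element $\beta$ represented by $\tau$ and by $Q(\pi)$ but not by $Q(\theta)$, applies \cref{subform} to get $Q(\langle\!\langle\beta\rangle\!\rangle\otimes\theta)=Q(\pi)$, then applies \cite[Proposition 15.7]{EKM} --- the representation criterion relating values of $\tau$ to bilinear Pfister factors of $\varphi$, which is the bridge between totally singular data and bilinear factors that your citations are missing --- to conclude that $\langle\!\langle\beta\rangle\!\rangle\otimes\theta$ is a factor of $\varphi$. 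Even then one only has a factor with the same quasi-Pfister form as $\pi$, not $\pi$ itself, so the paper passes to the function field of $\pi$ (over which $\varphi$ is now isotropic, hence hyperbolic) and invokes Laghribi's Witt kernel theorem \cite[(1.4)]{Lag:2005} together with repeated use of \cite[(15.6)]{EKM} to conclude that $\pi$ divides $\varphi$. Note also that your version of the equivalence never uses the hypothesis that $\theta$ is a common factor, whereas the paper's argument leans on it to produce $\beta$ and to apply \cite[Proposition 15.7]{EKM}; as written, your proposal assumes the hard direction rather than proving it.
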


\begin{proof}
Since $\theta$ is a factor of $\varphi$, there exists a  $2^{n-m}-1$ dimensional  bilinear form $b$ and a quadratic $1$-fold Pfister form  $\rho$ such that
 $$\varphi= \theta \otimes (\qf{1}\perp b )\otimes \rho\,.$$
Set $\tau =\theta\otimes  b\otimes \rho$ (note that $b\otimes \rho$ is the so-called pure part of the quadratic Pfister form $(\qf{1}\perp b)\otimes \rho$, see \cite[p.66]{EKM}). Then we have that  $$\varphi \perp Q(\theta)=2^{m-1} \otimes \varmathbb{H} \perp \tau \perp Q(\theta)\,.$$  Note  that   $\tau \perp Q(\theta)$ is anisotropic as $\varphi$ is aniostropic.

Suppose $\tau \perp Q(\pi)$ is isotropic.
Since $\tau \perp Q(\theta)$ is anisotropic, there exists an element  $\beta$ represented by $\tau$ and  $Q(\pi)$ but not by $Q(\theta)$.
As $\beta$ is represented by    $Q(\pi)$ but not by $Q(\theta)$, it follows from  \cref{subform} that $Q(\langle\!\langle \beta \rangle \! \rangle \otimes \theta) = Q(\pi)$.
As $\beta$ is represented by    $Q(\tau)$ but not by $Q(\theta)$, it follows from \cite[Proposition 15.7]{EKM} that $\langle \! \langle \beta \rangle \! \rangle \otimes \theta$ is a factor of $\varphi$.
In particular,  $\varphi$ becomes isotropic over the function field of $\pi$. Hence $\pi$ is a factor of $\varphi$ by \cite[(1.4)]{Lag:2005} and repeated use of \cite[(15.6)]{EKM}.
\end{proof}

\begin{thm}\label{goupins}
Assume $\operatorname{char}(F)=2$ and $n$ be an integer $\geq 2$ and $m\in \{1,\dots,n-1\}$.
Let $\varphi$ and $\psi$ be two non-hyperbolic, separably $(n-1)$-linked and totally inseparably $m$-linked  $n$-fold quadratic Pfister forms over $F$. Then there exists a field extension $K$ of $F$ such that $\varphi_K$ and $\psi_K$ are totally inseparably $(m+1)$-linked but neither  isometric nor hyperbolic.
\end{thm}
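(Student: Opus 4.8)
The plan is to imitate the proof of \cref{goup} step by step, replacing quadratic Pfister factors by bilinear ones, \cref{wittindex} by \cref{quasi}, and \cref{func} by the following inseparable analogue: if $\pi$ is an $(m+1)$-fold bilinear Pfister form that is a factor of $\varphi$ but not of $\psi$, then there is an extension $L/F$ over which $\pi_L$ becomes a factor of $\psi_L$ while $\varphi_L$ and $\psi_L$ remain non-isometric and non-hyperbolic. Granting this analogue, the compositum-and-iterate argument of \cref{goup} carries over verbatim: one collects the set $S$ of $(m+1)$-fold bilinear Pfister factors of $\varphi$ not dividing $\psi$ and the symmetric set $T$, takes the compositum $K_0$ of all the associated function fields, observes that over $K_0$ every $(m+1)$-fold bilinear Pfister form defined over $F$ divides $\varphi_{K_0}$ if and only if it divides $\psi_{K_0}$, and then iterates to obtain a field $K$ over which $\varphi_K$ and $\psi_K$ are totally inseparably $(m+1)$-linked but neither isometric nor hyperbolic.

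To prove the inseparable analogue of \cref{func} I would proceed as follows. Separable $(n-1)$-linkage produces, exactly as before, an anisotropic $n$-fold Pfister form $\phi$ with $\varphi \perp \psi \equiv \phi \pmod{I_q^{n+1}F}$, and the non-isometry of $\varphi_L$ and $\psi_L$ will follow once $\phi_L$ is shown to be non-hyperbolic. Let $\pi_0$ be an $m$-fold bilinear Pfister factor of $\pi$; by total inseparable $m$-linkage it is a common factor of $\varphi$ and $\psi$, so \cref{quasi} applies with $m$ replaced by $m+1$ and common factor $\pi_0$. Since $\pi$ is not a factor of $\psi$, we obtain $i_W(\psi \perp Q(\pi))=2^m$, so $\psi \perp Q(\pi)$ is Witt equivalent to an anisotropic form $\theta$ of dimension $2^n+2^{m+1}-2^{m+1}=2^n$. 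Set $L=F(\theta)$.

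The crux, and the only genuine departure from the separable argument, is to show that $\theta$ is \emph{not} similar to an $n$-fold quadratic Pfister form; this replaces the cohomological-invariant computation used in \cref{func}, and it is the main obstacle. Here the distinguishing invariant is the quasilinear (totally singular) part: the polar form of $\psi \perp Q(\pi)$ is nondegenerate on the $\psi$-part and identically zero on the $Q(\pi)$-part, so its radical is exactly the $Q(\pi)$-space, and hence the quasilinear part of $\psi \perp Q(\pi)$ is $Q(\pi)$, which is anisotropic of dimension $2^{m+1}$. Because $Q(\pi)$ is anisotropic, all isotropy of $\psi \perp Q(\pi)$ is regular and splits off as hyperbolic planes, and since hyperbolic planes have nondegenerate polar form this splitting leaves the radical unchanged; thus the quasilinear part of $\theta$ again has dimension $2^{m+1}>0$. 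Therefore $\theta$ is singular, whereas $n$-fold quadratic Pfister forms are nonsingular and singularity is preserved under similarity, so $\theta$ is not similar to one. Consequently, if any of $\varphi_L,\psi_L,\phi_L$ were hyperbolic then by \cite[Corollary 22.5]{EKM} the anisotropic form $\theta$ would be similar to a subform of it, and by equality of dimensions similar to an $n$-fold quadratic Pfister form, a contradiction. Hence all three are non-hyperbolic, so $\varphi_L$ and $\psi_L$ are not isometric; moreover $\psi_L$ and $\varphi_L$ are then anisotropic, whence $Q(\pi)_L$ is anisotropic as a subform of $\varphi_L$ and \cref{quasi} becomes available over $L$. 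Since $\theta_L$ is isotropic we have $i_W(\psi_L \perp Q(\pi)_L)>2^m$, so \cref{quasi} forces this index to equal $2^{m+1}$; that is, $\pi_L$ is a factor of $\psi_L$, as required. The remaining points — persistence of the anisotropy hypotheses of \cref{quasi} over the successive function fields, and survival of non-isometry and of total $(m+1)$-linkage under the compositum and the limit — are routine and mirror the corresponding verifications in \cref{goup}.
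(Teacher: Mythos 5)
Your proposal is correct and follows essentially the same route as the paper, whose entire proof of \cref{goupins} is the instruction to repeat the argument of \cref{goup} with \cref{quasi} in place of \cref{wittindex}; your compositum-and-iteration scheme and your inseparable analogue of \cref{func} are exactly that adaptation. The one step you had to supply yourself --- replacing the cohomological-invariant argument of \cref{func} (unavailable here, since the anisotropic form $\theta$ representing $\psi \perp Q(\pi)$ is singular) by the observation that $\theta$ has anisotropic quasilinear part $Q(\pi)$ and hence cannot be similar to a nonsingular form such as a quadratic $n$-fold Pfister form --- is precisely the detail the paper leaves implicit, and your version of it is correct.
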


\begin{proof} The result follows from \cref{quasi} in a similar way to \cref{goup}.
\end{proof}

Using Theorems \ref{goup} and \ref{goupins}, one can construct examples of non-isometric  pairs of  $n$-fold quadratic Pfister forms which are totally separably (or inseparably, or both) $m$-linked for any $m \in \{1,\dots,n-1\}$.

\begin{exmpl}
Start with the field $F=\mathbb{F}_2(\!(x_1)\!)\dots(\!(x_{n+1})\!)$ of iterated Laurent series in $n+1$ indeterminates over $\mathbb{F}_2$. The forms $$\varphi=\langle \! \langle x_1,\dots x_{n-1},x_1\cdot\ldots\cdot x_n]\!]\quad  \mathrm{ and }\quad  \psi=\langle \! \langle x_2,\dots x_{n},x_2\cdot\ldots\cdot x_{n+1}]\!]$$ over $F$ are $(n-1)$-linked but not totally 1-linked (see \cite[Sections 9\&10]{ChapmanGilatVishne:2017}). By iterating Theorem \ref{goup} (or \ref{goupins}) $m$ times, we end up with a field $K$ over which $\varphi_K$ and $\psi_K$ are totally $m$-linked, but neither isometric  nor hyperbolic.
\end{exmpl}

\begin{ques}
When $\operatorname{char}(F)=p$, does there exist a similar process that extends two non-equal, nontrivial  separably $(n-1)$ symbols in $H_p^n(F)$ to two  non-equal, nontrivial  totally separably (or inseparably) $(n-1)$-linked symbols?
\end{ques}
%

\begin{thm}\label{control}
Assume  $\operatorname{char}(F) \neq 2$ and $n$ be an integer $\geq 3$ and $m\in \{1,\dots,n-3\}$.
Let $\varphi$ and $\psi$ be two non-hyperbolic  $(n-1)$-linked and totally $m$-linked  $n$-fold quadratic Pfister forms over $F$. Assume there exists an $(n-1)$-fold quadratic Pfister form $\omega$ such that 
\begin{enumerate}
\item[(a)] $\omega$ is a factor of $\varphi$,
\item[(b)] $\omega$ is not a factor of $\psi$,
\item[(c)] there exists an $(n-2)$-fold quadratic Pfister form that is a factor of both $\omega$ and $\psi$.
\end{enumerate}
Then there exists a field extension $K$ of $F$ such that $\varphi_K$ and $\psi_K$ are totally $(m+1)$-linked but not totally $(n-1)$-linked nor hyperbolic.
\end{thm}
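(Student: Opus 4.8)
The plan is to reuse the field-theoretic construction of \cref{goup} essentially verbatim to obtain, over a suitable tower, total $(m+1)$-linkage together with non-hyperbolicity, and to add one new ingredient that keeps $\varphi$ and $\psi$ from becoming totally $(n-1)$-linked. Since isometric Pfister forms share all their factors, it suffices to preserve a single witness of non-total $(n-1)$-linkage, and hypotheses (a)--(c) hand us exactly such a witness in $\omega$: over any extension $\omega$ remains a factor of $\varphi$ because factors persist under base change, so it is enough to arrange that $\omega$ does not become a factor of $\psi$ over the final field $K$. (In particular this also forces $\varphi_K$ and $\psi_K$ to be non-isometric.)

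First I would translate ``$\omega$ is a factor of $\psi$'' into a statement about Witt indices. Since $\omega$ and $\psi$ share the $(n-2)$-fold factor supplied by (c), \cref{wittindex}, applied with the role of $m$ played by $n-1$, shows that over every extension $E/F$ one has $i_W(\psi_E \perp -\omega_E) \in \{2^{n-2},2^{n-1}\}$, the larger value occurring precisely when $\omega_E$ is a factor of $\psi_E$. Writing $\eta$ for the anisotropic part of $\psi \perp -\omega$ over $F$, hypothesis (b) gives $i_W(\psi \perp -\omega)=2^{n-2}$, so $\dim \eta = 2^n$, and since $[\eta]=[\psi]-[\omega]$ with $\psi$ anisotropic $n$-fold and $\omega$ anisotropic $(n-1)$-fold, we get $\eta \in I^{n-1}F \setminus I^n F$. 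The dichotomy above says that $\omega_E$ becomes a factor of $\psi_E$ if and only if $\eta_E$ becomes isotropic. Thus the whole theorem reduces to running the tower of \cref{goup}, which already delivers total $(m+1)$-linkage and non-hyperbolicity, while keeping $\eta$ anisotropic.

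The construction of \cref{goup} adjoins the function fields of the $2^n$-dimensional forms $\theta$ (and $\theta'$) that are Witt equivalent to $\pi \perp \psi$ (respectively $\pi' \perp \varphi$), for the $(m+1)$-fold Pfister forms $\pi$ distinguishing $\varphi$ and $\psi$; crucially, each such $\theta$ lies in $I^{m+1}F$, and the hypothesis $m \leq n-3$ forces $m+1 \leq n-2 < n-1$, so $\theta$ sits in a strictly shallower layer of the fundamental filtration than $\eta \in I^{n-1}F$. The heart of the argument, and the step I expect to be the main obstacle, is to show that $\eta$ stays anisotropic over each $F(\theta)$. Hoffmann's separation theorem just misses this, since $\dim \eta = \dim \theta = 2^n$ leaves no power of two strictly between the two dimensions; so instead I would exploit the filtration gap in the spirit of the cohomological-invariant argument already used in \cref{func}. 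Concretely, if $\eta_{F(\theta)}$ were isotropic, then by the dichotomy its anisotropic kernel would be a $2^{n-1}$-dimensional form lying in $I^{n-1}F(\theta)$, hence similar to an $(n-1)$-fold Pfister form by the Arason--Pfister Hauptsatz; comparing this deep $I^{n-1}$-datum against the fact that $\theta \in I^{m+1}F \setminus I^{m+2}F$ has nontrivial $(m+1)$-th cohomological invariant, and so cannot produce $I^{n-1}$-coincidences through $F(\theta)$, should yield a contradiction via a subform and descent argument. Granting this, $\eta$ survives every stage, the tower of \cref{goup} can be iterated unchanged to secure total $(m+1)$-linkage and non-hyperbolicity, and $\omega$ persists as a factor of $\varphi_K$ but not of $\psi_K$; hence $\varphi_K$ and $\psi_K$ are totally $(m+1)$-linked but neither totally $(n-1)$-linked nor hyperbolic, as required.
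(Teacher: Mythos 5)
Your reduction is exactly the one the paper makes: run the tower of \cref{goup}, and observe that by hypotheses (b), (c) and \cref{wittindex} the form $\psi \perp -\omega$ is Witt equivalent to an anisotropic form $\eta$ of dimension $2^n$ lying in $I^{n-1}F\setminus I^nF$, so that it suffices to keep $\eta$ anisotropic over each function field $F(\theta)$ used in the construction, since (again by \cref{wittindex}) anisotropy of $\eta$ over the final field is what prevents $\omega$ from becoming a factor of $\psi$. You also correctly note that Hoffmann's separation theorem is of no use here because $\dim\eta=\dim\theta=2^n$, and that $m\leq n-3$ places $\theta$ in $I^{m+1}F\setminus I^{m+2}F$ with $m+2\leq n-1$, a strictly shallower layer of the filtration than $\eta$. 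But at precisely this point --- the only point where the theorem requires something beyond \cref{goup} --- your argument stops being a proof. The paper settles the step by citing \cite[Theorem 5.4]{Hoffmann:1996}: $\eta$ is a twisted Pfister form in Hoffmann's sense, and that theorem governs the isotropy of such forms over function fields of quadrics, the filtration gap ruling out its exceptional cases. This is a genuinely nontrivial external input; it does not fall out of the Hauptsatz.

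Your proposed substitute does not work as described. From isotropy of $\eta_{F(\theta)}$ you do get, via \cref{wittindex}, that its anisotropic part is a $2^{n-1}$-dimensional form similar to an $(n-1)$-fold Pfister form over $F(\theta)$ --- indeed it is similar to $\omega_{F(\theta)}$, since isotropy forces $\psi_{F(\theta)}\cong \omega_{F(\theta)}\otimes\langle\!\langle x\rangle\!\rangle$ for some $x\in F(\theta)^\times$. But the ``subform and descent argument'' you then invoke has no mechanism behind it: the subform theorem \cite[Corollary 22.5]{EKM}, which is what powers the cohomological-invariant argument in \cref{func}, requires a form defined over $F$ to become \emph{hyperbolic} over $F(\theta)$, whereas here you only have \emph{isotropy} of $\eta_{F(\theta)}$, and the witnessing scalar $x$ lives over $F(\theta)$, so there is no $F$-form whose hyperbolicity you can feed into the subform theorem and nothing to descend. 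Comparing invariants of $\theta$ and $\eta$ over $F$ gives nothing by itself either, since isotropy over $F(\theta)$ is not detected by invariants computed over $F$. So the gap is real: the anisotropy of $\eta$ over $F(\theta)$ is exactly the content of \cite[Theorem 5.4]{Hoffmann:1996} (or an equivalent result on twisted Pfister forms), and your sketch does not reconstruct it.
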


\begin{proof}
This is essentially the same proof as in Theorem \ref{goup}.
Here we just need to note the following:
The form $\psi \perp -\omega$ is Witt equivalent to some anisotropic $2^n$ dimensional form. The latter remains anisotropic under scalar extension to $L$ by  \cite[Theorem 5.4]{Hoffmann:1996}. Therefore $\omega_L$ is not a subform of $\psi_L$, and that completes the proof.
\end{proof}

\begin{exmpl}
Start with the field $F=\mathbb{C}(\!(x_1)\!)\dots(\!(x_{n+1})\!)$ of iterated Laurent series in $n+1$ indeterminates over $\mathbb{C}$. The forms $\varphi=\langle\!\langle x_1,\dots,x_n\rangle\!\rangle$ and $\psi=\langle\!\langle x_2,\dots,x_{n+1}\rangle\!\rangle$ over $F$ are $(n-1)$-linked but not totally 1-linked. By iterating Theorem \ref{goup} $m$ times, we end up with a field $K$ over which $\varphi_K$ and $\psi_K$ are totally $m$-linked, but neither hyperbolic nor isometric. If $m \leq n-2$ then by Theorem \ref{control}, $\varphi_K$ and $\psi_K$ are also not totally $(n-1)$-linked.
\end{exmpl}

\section*{Acknowledgements}
The authors thank Jean-Pierre Tignol for his comments on the manuscript. We also thank the annoymous referee for a careful reading of the manuscript and helpful suggestions for its improvement.

\bibliographystyle{abbrv}

\end{document}